\theoremstyle{plain}
\newtheorem{lem}{Lemma}[section]
\newtheorem{dfn}[lem]{Definition}
\newtheorem{theo}[lem]{Theorem}
\newtheorem{pro}[lem]{Proposition}
\newtheorem{cnj}[lem]{Conjecture}
\newtheorem{cor}[lem]{Corollary}
\newtheorem{exes}[lem]{Examples}
\theoremstyle{definition}
\newtheorem{ex}{Example}
\newcommand{\vp}{\varphi}
\newcommand{\lf}{\lfloor}
\newcommand{\rf}{\rfloor}
\newcommand{\N}{\mathbb{N}}
\def\DJ{{\hbox{D\kern-.8em\raise.15ex\hbox{--}\kern.35em}}}
\newcommand{\ps}{0.25}
\title{Fibonacci-like partitions and their associated piecewise-defined permutations}
\author{Jon Asier B\'arcena-Petisco, Luis Mart\'\i nez, Mar{\'\i}a Merino,\\ Juan Manuel Montoya, Antonio Vera-L\'opez}
\begin{document}

\maketitle

\begin{abstract}
In this paper we introduce a family of partitions of the set of natural numbers, Fibonacci-like partitions. In particular, we introduce a Fibonacci-like partition in a number of parts corresponding to the Fibonacci numbers, the standard Fibonacci-like partitions of the first kind. That partition refines the well-known partition in two parts associated to the Wythoff sequences. In a similar way, we introduce another family of Fibonacci-like partitions of the natural numbers, with an initial segment removed, in a number of parts which also follows the Fibonacci sequence, the standard Fibonacci-like partitions of the second kind.

We study some piecewise-defined permutations over Fibonacci-like partitions, which give both torsion-free and torsion elements in the symmetric group over the set of natural numbers.

We give an application of Fibonacci-like partitions to the study of a sequence analyzed by A. Shapovalov and by B.J. Venkatachala.

We give also a generalization of the mentioned sequence, involving techniques that are not related to Fibbonaci numbers nor Beatty sequences, obtaining an infinite family of sequences that we conjecture that produce an infinite orthogonal array in which the number of symbols, trials and factors is countable infinite, and having an automorphism group isomorphic to $\mathbb Z$ acting regularly on the symbol set.

\end{abstract}

\section{Introduction}\label{intr}

Beatty sequences are important in number theory and in many other areas of mathematics. Given a positive irrational number $\alpha$, the Beatty sequence $\mathfrak B_{\alpha}$ is defined as  $$\mathfrak B_{\alpha}=(\lfloor n\alpha\rfloor)_{n\in\mathbb N},$$ where $\lfloor x\rfloor$ denotes the floor of the number $x$ (be careful that some authors denote by $\mathfrak B_{1/\alpha}$ what we are denoting by $\mathfrak B_{\alpha}$). We will call range of a sequence to the set of its images, and if a sequence is given by a mapping $f$, we will denote its range by $\mathfrak R(f)$. Two well-known Beatty sequences are those corresponding to $\alpha=\varphi$ and $\alpha=\varphi^2$, where $\varphi=\frac{1+\sqrt{5}}{2}$ denotes the golden ratio, which will be denoted by this same symbol in the rest of the paper. They are called the lower and upper Wythoff sequences, respectively. We will do an extensive use of these two sequences in the sequel, and we will denote by $a(n)$ and $b(n)$, respectively, their general terms, that is, $a(n)=\lfloor n\varphi\rfloor$ and $b(n)=\lfloor n\varphi^2\rfloor$. Accordingly, $A$ and $B$ will denote the ranges $A=\mathfrak R(\mathfrak B_{\alpha})=\{1,3,4,6,8,9,\cdots\}=\{\lfloor \varphi n\rfloor\mid n\in\mathbb N\}$ and $B=\mathfrak R(\mathfrak B_{\alpha^2})=\{2,5,7,10,13,15,\cdots\}=\{\lfloor \varphi^2 n\rfloor\mid n\in\mathbb N\}$ of the lower and upper Wythoff sequences, respectively.

A well-known result, Beatty's theorem (\cite{beatty}), states that the complement of the range of a Beatty sequence is again the range of a Beatty sequence. More specifically, if $\alpha$ is an irrational number greater than $1$, then $\mathbb N-\mathfrak R(\mathfrak B_{\alpha})=\mathfrak R(\mathfrak B_{\alpha/(\alpha-1)})$. In other words, the ranges of the Beatty sequences corresponding to $\alpha$ and $\alpha/(\alpha-1)$ partition the set of natural numbers as a disjoint union of two subsets.

Uspensky's theorem (\cite{usp}) proves that this is the most we can get with Beatty sequences, that is, we cannot partition the set of natural numbers as a union of more than two pairwise disjoint ranges of Beatty sequences. Hildebrand and Li managed in (\cite{hildli}) to partition the set $\mathbb N$ into more than two parts by using a generalization of Beatty sequences that they called almost Beatty sequences. Moreover, Allouche and Dekking presented in (\cite{ad}) generalized Beatty sequences that gave partitions of the set of natural numbers in three parts.

Let us now explain in detail the contribution in our paper.

In section \ref{fibpart} we will present Fibonacci-like partitions, which are a very general family of partitions of the set of natural numbers.

More specifically, in Section \ref{sfibpart} we will introduce a family $\{\mathfrak P_n\}_{n\in\mathbb N}$ of partitions, the standard Fibonacci-like partitions of the first kind. In the $n$-th partition $\mathfrak P_n$ the set of natural numbers is partitioned in $F(n)$ parts, where $F(n)$ is the Fibonacci sequence defined by $F(1)=1,F(2)=1$ and $F(n)=F(n-1)+F(n-2)$ when $n\geq 3$. Although Fibonacci sequence can be extended in the obvious way to arbitrary integer values of $n$, we will need only in the sequel the value $F(0)=0$. We will introduce also in the same section another family $\{\mathfrak P^{\prime}_n\}_{n\in\mathbb N}$ of partitions of $\{m\in\mathbb N\mid m\geq F(n)\}$, the standard Fibonacci-like partitions of the second kind. In this case the partition $\mathfrak P^{\prime}_n$ also has $F(n)$ parts.

In Section \ref{permpart} we will study some permutations of the set of natural numbers that can be defined piecewise on blocks of Fibonacci-like partitions, showing that we can get both elements of finite and infinite order in the symmetric group.

In Section \ref{m2} we will give an application of standard Fibonacci-like partitions to study, by using an algorithm that we introduce in that section, a permutation of the set of nonnegative integers analyzed by A. Shapovalov (\cite{shap}) and by B.J. Venkatachala (\cite{venk}).

In Section \ref{ioas} we will extend the concept of orthogonal array to the case in which the alphabet $S$ and the set indexing the factors is infinite, generalizing thus the concept of orthogonal array to the one of infinite orthogonal array.

We will analyze in more detail the algorithm presented in the previous section, which we conjecture that conforms an infinite difference matrix. This difference matrix originates an infinite orthogonal array. These infinite OAs have an automorphism group acting regularly on the set of symbols. The conjectured difference matrix is non-trivial, in the sense that it cannot be originated from an infinite countable field.

Finally, in Section \ref{oq} we will comment some open questions with respect to these infinite orthogonal arrays and present some future lines of research.

\section{Fibonacci-like partitions of the natural numbers}\label{fibpart}

In this paper $\mathbb Z^{\geq 0}$ will denote the set of non-negative numbers, that is, $\mathbb Z^{\geq 0}=\{0,1,2,\ldots\}$ and $\mathbb N$ will denote the set of natural numbers, that is, $\mathbb N=\{1,2,\ldots\}$.

If $i\in\mathbb Z^{\geq 0}$ and $j\in\mathbb Z$ we will define the sequence $f_{i,j}$ by

$$f_{i,j}(n)=F(i+1)a(n)+F(i)n-j\text{ for }n\in\mathbb N.$$

Given $i$ and $j$ we will put $R_{i,j}=\mathfrak R(f_{i,j})$, that is, $R_{i,j}$ will denote the range of the sequence $f_{i,j}$.

These sequences $f_{i,j}$ are special cases of the generalized Beatty sequences presented by Allouche and Dekking in \cite{ad}.

The family of sequences $\{f_{i,j}\}$ contains as special cases the compositions of lower and upper Wythoff sequences studied by Carlitz, Scoville and Hoggat in (\cite{csh1}) and (\cite{csh2}) and rediscovered latter by Kimberling in (\cite{kim}).

Let us give some important examples:

\begin{exes}\label{exesfij}\hfill
\begin{enumerate}
\item $f_{0,0}=1, 3, 4, 6, 8, 9, 11, 12, 14, 16,\dots$ and $f_{1,0}=2, 5, 7, 10, 13, 15, 18, 20, 23, 26,\dots$ are the lower and upper Wythoff sequences, respectively (sequences A000201 and A001950 in Sloane's on-line encyclopedia of integer sequences \cite{sol}), whose ranges $R_{0,0}$ and $R_{1,0}$ form a partition of the set $\mathbb N$ of natural numbers.
\item $f_{0,1}=0, 2, 3, 5, 7, 8, 10, 11, 13, 15,\dots$ is the sequence A022342 in \cite{sol} formed by the integers with \lq even\rq\ Zeckendorf expansions.
\item $f_{0,-1}=2, 4, 5, 7, 9, 10, 12, 13, 15, 17,\dots$, when an item $1$ is added at the beginning, is the sequence with reference A026351 in \cite{sol}, and when an item $0$ is added at the beginning, is the sequence A004956. Those sequences are defined in terms of the floor and ceiling functions, respectively.
\item $f_{0,2}=-1, 1, 2, 4, 6, 7, 9, 10, 12, 14,\dots$, when the first item, $-1$, is removed, is the sequence A026273 in \cite{sol} whose $n$-th term is the least $k$ such that $s(k)=n$, where $s=$A026272.
\item $f_{0,-2}=3, 5, 6, 8, 10, 11, 13, 14, 16, 18,\dots$, when an item $2$ is added at the beginning, is the sequence A099267 in \cite{sol} formed by the numbers generated by the golden sieve.
\item $f_{0,3}=-2, 0, 1, 3, 5, 6, 8, 9, 11, 13,\dots$, when the first item, $-2$, is removed, is the sequence A058065 in \cite{sol}, which is referenced as the complement of A057843.
\item $f_{0,-3}=4, 6, 7, 9, 11, 12, 14, 15, 17, 19,\dots$, when the items $1$ and $2$ are added at the beginning, is the sequence A184732 in \cite{sol}, which is referenced as the complement of A184733.
\item $f_{1,1}=1, 4, 6, 9, 12, 14, 17, 19, 22, 25,\dots$ is the sequence A003622 in \cite{sol} formed by the Wythoff compound sequence $AA$.
\item $f_{1,-3}=5, 8, 10, 13, 16, 18, 21, 23, 26, 29,\dots$, when an item $1$ is added at the beginning, is the sequence A022413 in \cite{sol} formed by the Kimberling sums $K_n+K_2$.
\item $f_{2,0}=3, 8, 11, 16, 21, 24, 29, 32, 37, 42,\dots$ is the sequence A003623 in \cite{sol} formed by the Wythoff $AB$-numbers.
\item $f_{2,1}=2, 7, 10, 15, 20, 23, 28, 31, 36, 41,\dots$ is the sequence A035336 in \cite{sol} formed by the second column of Wythoff array.
\item $f_{2,-1}=4, 9, 12, 17, 22, 25, 30, 33, 38, 43,\dots$, is the sequence A089910 in \cite{sol} formed by the indices at which blocks $(1;1)$ occur in infinite Fibonacci word.
\item $f_{2,2}=1, 6, 9, 14, 19, 22, 27, 30, 35, 40,\dots$ is the sequence A134859 in \cite{sol} formed by the Wythoff $AAA$ numbers.
\item $f_{2,3}=0, 5, 8, 13, 18, 21, 26, 29, 34, 39,\dots$, when the first item, $0$, is removed, is the sequence A101345 in \cite{sol} whose $n$-th term is formed by Knuth's Fibonacci (or circle) product \lq $2\circ n$\rq.
\item $f_{2,-3}=6, 11, 14, 19, 24, 27, 32, 35, 40, 45,\dots$, when the item $3$ is added at the beginning, is the sequence A047924 in \cite{sol} whose $n$-th term is $b(a(n)+1)+1$, where $a$ and $b$ are the corresponding lower and upper Wythoff sequences.
\item $f_{2,4}=-1, 4, 7, 12, 17, 20, 25, 28, 33, 38,\dots$, when the first item, $-1$, is removed, is the sequence A190460 in \cite{sol} formed by the positions of $2$ in A190457.
\item $f_{3,0}=5, 13, 18, 26, 34, 39, 47, 52, 60, 68,\dots$ is the sequence A101864 in \cite{sol} formed by the Wythoff $BB$ numbers.
\item $f_{3,1}=4, 12, 17, 25, 33, 38, 46, 51, 59, 67,\dots$ is the sequence A134860 in \cite{sol} formed by the Wythoff $AAB$ numbers.
\item $f_{3,2}=3, 11, 16, 24, 32, 37, 45, 50, 58, 66,\dots$ is the sequence A035337 in \cite{sol} formed by the third column of Wythoff array.
\item $f_{3,3}=2, 10, 15, 23, 31, 36, 44, 49, 57, 65,\dots$ is the sequence A134861 in \cite{sol} formed by the Wythoff $BAA$ numbers.
\item $f_{3,-3}=8, 16, 21, 29, 37, 42, 50, 55, 63, 71,\dots$, when the item $3$ is added at the beginning, is the sequence A188012 in \cite{sol}, formed by the positions of $0$ in A188011.
\item $f_{3,5}=0, 8, 13, 21, 29, 34, 42, 47, 55, 63,\dots$, when the first item, $0$, is removed, is the sequence A101642 in \cite{sol} whose $n$-th term is formed by Knuth's Fibonacci (or circle) product \lq $3\circ n$\rq.
\item $f_{4,0}=8, 21, 29, 42, 55, 63, 76, 84, 97, 110,\dots$ is the sequence A134862 in \cite{sol} formed by the Wythoff $ABB$ numbers.
\item $f_{4,1}=7, 20, 28, 41, 54, 62, 75, 83, 96, 109,\dots$ is the sequence A134863 in \cite{sol} formed by the Wythoff $BAB$ numbers.
\item $f_{4,3}=5, 18, 26, 39, 52, 60, 73, 81, 94, 107,\dots$ is the sequence A035338 in \cite{sol} formed by the fourth column of Wythoff array.
\end{enumerate}
\end{exes}

We will state the theorem of bifurcation of the $R_{i,j}$ that will be the basis for Fibonacci-like partitions. We will give first a lemma:

\begin{lem}\label{basicrelwyth} If $n\in\mathbb N$ then
\begin{enumerate}
\item $a(a(n))=a(n)+n-1$.
\item $a(b(n))=2a(n)+n$.
\item $b(n)=a(n)+n$.
\end{enumerate}
\end{lem}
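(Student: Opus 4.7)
The plan is to handle the three identities in the order (iii), (i), (ii), since (iii) is a one-line computation and feeds directly into the key identity (ii), while (i) requires the core manipulation of $\varphi \cdot a(n)$ that also drives (ii).

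For (iii), the idea is simply to use the defining relation $\varphi^2=\varphi+1$ of the golden ratio. Writing $n\varphi^{2}=n\varphi+n$ and pulling the integer $n$ out of the floor function, we immediately get $b(n)=\lfloor n\varphi\rfloor+n=a(n)+n$.

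For (i), I would compute $\varphi\, a(n)$ explicitly. Let $\{x\}$ denote the fractional part. Since $\varphi$ is irrational and $n\in\mathbb{N}$, we have $\{n\varphi\}\in(0,1)$ strictly. From $a(n)=n\varphi-\{n\varphi\}$, multiplying by $\varphi$ and using $\varphi^{2}=\varphi+1$ and $\varphi-1=1/\varphi$ gives
\begin{equation*}
\varphi\, a(n)=n\varphi^{2}-\varphi\{n\varphi\}=(n\varphi-\{n\varphi\})+n-(\varphi-1)\{n\varphi\}=a(n)+n-\tfrac{1}{\varphi}\{n\varphi\}.
\end{equation*}
Because $\tfrac{1}{\varphi}\{n\varphi\}\in(0,1/\varphi)\subset(0,1)$, taking floors yields $a(a(n))=\lfloor\varphi\,a(n)\rfloor=a(n)+n-1$, which is exactly (i).

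For (ii), I would combine the previous two steps. By (iii), $b(n)=a(n)+n$, so
\begin{equation*}
a(b(n))=\lfloor \varphi\,(a(n)+n)\rfloor=\lfloor \varphi\,a(n)+n\varphi\rfloor.
\end{equation*}
Substituting $\varphi\,a(n)=a(n)+n-\tfrac{1}{\varphi}\{n\varphi\}$ and $n\varphi=a(n)+\{n\varphi\}$ and collecting terms, the sum equals $2a(n)+n+\bigl(1-\tfrac{1}{\varphi}\bigr)\{n\varphi\}=2a(n)+n+\tfrac{1}{\varphi^{2}}\{n\varphi\}$, where I use $1-1/\varphi=2-\varphi=1/\varphi^{2}$. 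Since $\tfrac{1}{\varphi^{2}}\{n\varphi\}\in(0,1/\varphi^{2})\subset(0,1)$, the floor is $2a(n)+n$, proving (ii).

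The only subtlety, and the one point worth being careful about, is ensuring that every fractional remainder that appears lies strictly inside $(0,1)$; this is where the irrationality of $\varphi$ is used, guaranteeing that $\{n\varphi\}\neq 0$ for any $n\in\mathbb{N}$ so that the floors are evaluated without ambiguity. No other obstacle arises.
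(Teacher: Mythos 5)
Your proof is correct: all three identities follow from your fractional-part computations, and you rightly isolate the only delicate point, namely that $\{n\varphi\}\in(0,1)$ strictly (irrationality of $\varphi$) so that each remainder $\tfrac{1}{\varphi}\{n\varphi\}$ and $\tfrac{1}{\varphi^{2}}\{n\varphi\}$ lies strictly in $(0,1)$ and the floors evaluate unambiguously; the algebraic identities $\varphi-1=1/\varphi$ and $2-\varphi=1/\varphi^{2}$ that you use are also right. The route, however, differs from the paper's: the paper does not argue at all, but simply cites identities (1.11) and (1.12) of Carlitz, Scoville and Vaughan, from which (i) and (iii) are taken verbatim and (ii) is an immediate consequence. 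What your approach buys is a short, self-contained elementary verification that makes the lemma independent of the reference and exhibits explicitly where irrationality enters; what the paper's approach buys is brevity and a pointer to the standard source where these Wythoff-sequence identities (and many relatives used elsewhere in the paper) are developed systematically. Your ordering (iii), (i), (ii) is also a sensible reorganization, since (iii) is immediate and your expression for $\varphi\,a(n)$ does double duty in (i) and (ii).
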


\begin{proof}
Parts (i) and (iii) of the lemma are (1.11) in (\cite{csv}). Part (ii) of the lemma is a consequence of (1.11) and (1.12) in (\cite{csv}).
\end{proof}

\begin{theo}\label{bifparts} If $i\in\mathbb Z^{\geq 0}$ and $j\in\mathbb Z$, then $$R_{i,j}=R_{i+1,F(i+1)+j}\cup R_{i+2,j},$$ where the union is a disjoint union.
\end{theo}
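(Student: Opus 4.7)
The plan is to split the range $R_{i,j}$ according to the Beatty partition $\mathbb{N}=A\sqcup B$ of the index set. Since $f_{i,j}(n)=F(i+1)a(n)+F(i)n-j$ and every natural number $n$ is uniquely of the form $a(m)$ or $b(m)$, we can write
\[
R_{i,j}=\{f_{i,j}(a(m)):m\in\mathbb N\}\cup\{f_{i,j}(b(m)):m\in\mathbb N\},
\]
and it suffices to show that the first set equals $R_{i+1,F(i+1)+j}$ and the second equals $R_{i+2,j}$, together with disjointness.

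For the first set I would substitute $n=a(m)$ and apply Lemma~\ref{basicrelwyth}(i), which gives $a(a(m))=a(m)+m-1$. This yields
\[
f_{i,j}(a(m))=F(i+1)\bigl(a(m)+m-1\bigr)+F(i)a(m)-j=\bigl(F(i+1)+F(i)\bigr)a(m)+F(i+1)m-\bigl(F(i+1)+j\bigr),
\]
and by the Fibonacci recurrence $F(i+2)=F(i+1)+F(i)$ the right-hand side is exactly $f_{i+1,F(i+1)+j}(m)$. For the second set I would substitute $n=b(m)$ and apply parts (ii) and (iii) of Lemma~\ref{basicrelwyth}, namely $a(b(m))=2a(m)+m$ and $b(m)=a(m)+m$, obtaining
\[
f_{i,j}(b(m))=F(i+1)(2a(m)+m)+F(i)(a(m)+m)-j=\bigl(2F(i+1)+F(i)\bigr)a(m)+\bigl(F(i+1)+F(i)\bigr)m-j,
\]
and two applications of the Fibonacci recurrence identify this with $F(i+3)a(m)+F(i+2)m-j=f_{i+2,j}(m)$.

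For disjointness, I would observe that $f_{i,j}$ is strictly increasing (since $F(i+1)\geq 1$ and $a$ is strictly increasing, while $F(i)\geq 0$), hence injective. Therefore an equality $f_{i,j}(a(m))=f_{i,j}(b(m'))$ would force $a(m)=b(m')$, contradicting $A\cap B=\emptyset$ (Beatty's theorem applied to $\varphi$ and $\varphi^2$).

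I do not anticipate a real obstacle: the argument reduces to tracking the Fibonacci recurrence through the identities of Lemma~\ref{basicrelwyth}. The mildly delicate point is keeping the constant term $-j$ straight through the first substitution, where the shift becomes $F(i+1)+j$ rather than $j$; this is what forces the asymmetry between the two summands in the statement of the theorem.
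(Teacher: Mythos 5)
Your proposal is correct and follows essentially the same route as the paper: it reduces the statement to the identities $f_{i,j}(a(m))=f_{i+1,F(i+1)+j}(m)$ and $f_{i,j}(b(m))=f_{i+2,j}(m)$, verified with parts (i)--(iii) of Lemma~\ref{basicrelwyth} and the Fibonacci recurrence, and then invokes the Wythoff partition $\mathbb{N}=A\sqcup B$. Your explicit injectivity argument for the disjointness (via $f_{i,j}$ being strictly increasing) is a point the paper leaves implicit, but it is the natural completion of the same argument.
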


\begin{proof} It is a consequence of the two following identities: \begin{equation}\label{theoid1} f_{i+1,F(i+1)+j}(n)=f_{i,j}(a(n))\ \forall i\in \mathbb Z^{\geq 0},\forall j\in\mathbb Z,\forall n\in\mathbb N\end{equation} and \begin{equation}\label{theoid2} f_{i+2,j}(n)=f_{i,j}(b(n))\ \forall i\in \mathbb Z^{\geq 0},\forall j\in\mathbb Z,\forall n\in\mathbb N.\end{equation} Identity (\ref{theoid1}) is equivalent to: $$F(i+2)a(n)+F(i+1)n-F(i+1)-j=F(i+1)a(a(n))+F(i)a(n)-j,$$ that is, to $$F(i+1)a(n)+F(i+1)n-F(i+1)=F(i+1)a(a(n))$$ and, factoring out $F(i+1)$, the result follows from part (i) of the previous lemma.

With respect to Identity (\ref{theoid2}), it is equivalent to: \begin{equation}\label{fipt}F(i+3)a(n)+F(i+2)n-j=F(i+1)a(b(n))+F(i)b(n)-j.\end{equation} By part (ii) of the previous lemma, $a(b(n))=2a(n)+n$, and therefore (\ref{fipt}) is equivalent to: $$F(i+3)a(n)+F(i+2)n=2F(i+1)a(n)+F(i+1)n+F(i)b(n),$$ and hence to: $$F(i+2)a(n)+F(i)n=F(i+1)a(n)+F(i)b(n).$$ This is equivalent to: $$F(i) a(n)+F(i)n=F(i) b(n)$$ and, after factoring out $F(i)$, to $a(n)+n=b(n)$, which is true by part (iii) of the previous lemma.
\end{proof}

If we have a partition of $\mathbb N$ as a disjoint union of some $R_{i,j}$, which we will call $R_{i,j}$-partitions, we can obtain new partitions by applying Theorem \ref{bifparts} to some of the parts. The partition of the following example will be used later in Section \ref{permpart}:

\begin{ex} $R_{1,0},R_{2,0},R_{2,2},R_{3,1}$ is a partition of $\mathbb N$. To prove it, we consider first the classical partition $R_{0,0},R_{1,0}$ indicated in the first example of Examples \ref{exesfij}, formed by the ranges of the lower and upper Wythoff sequences. If we apply Theorem \ref{bifparts} with $i=0,j=0$ we obtain the partition $R_{1,0},R_{1,1},R_{2,0}$ and, if we apply again the theorem with $i=1,j=1$ we obtain the partition
$$R_{1,0}=\{2, 5, 7, 10, 13, 15, 18, 20, 23, 26,\dots\},$$
$$R_{2,0}=\{3, 8, 11, 16, 21, 24, 29, 32, 37, 42,\dots\},$$
$$R_{2,2}=\{1, 6, 9, 14, 19, 22, 27, 30, 35, 40,\dots\},$$
$$R_{3,1}=\{4, 12, 17, 25, 33, 38, 46, 51, 59, 67,\dots\}.$$ 
\end{ex}

\section{Standard Fibonacci-like partitions of the natural numbers}\label{sfibpart}

\begin{dfn}\label{dksflpfk} If $k\geq3$, we will call $k$-th standard Fibonacci-like partition of the first kind $\mathfrak P_k$ to 
$$\big\{R_{i,0},R_{i,1},\ldots,R_{i,F(i+2)-1},R_{i+1,0},R_{i+1,1},\ldots, 
R_{i+1,F(i+1)-1}
\big\},$$
for $i=k-3$. Also, we define $\mathfrak P_1=\mathfrak P_2=\N$.
\end{dfn}

\begin{theo}\label{fibonaccipart} If $k\in\mathbb N$, then the $k$-th standard partition $\mathfrak P_k$ is a partition in $F(k)$ parts of the set of natural numbers.
\end{theo}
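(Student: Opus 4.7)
The plan is to prove the statement by induction on $k$, with Theorem \ref{bifparts} providing the mechanism by which one passes from $\mathfrak{P}_k$ to $\mathfrak{P}_{k+1}$.

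The base cases are immediate. For $k=1,2$ we have $\mathfrak{P}_k = \mathbb{N}$, which is a partition into $F(1) = F(2) = 1$ part. For $k=3$ the index is $i = k-3 = 0$ and $F(i+1)=F(i+2)=1$, so $\mathfrak{P}_3 = \{R_{0,0}, R_{1,0}\}$; this is the classical Wythoff partition of $\mathbb{N}$ into two pieces, recorded in item (i) of Examples \ref{exesfij}, and $F(3)=2$ gives the correct count.

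For the inductive step I would assume $\mathfrak{P}_k$ is a partition of $\mathbb{N}$ into $F(k)$ parts, with $i = k-3 \geq 0$, and apply Theorem \ref{bifparts} to each block in its first group, writing
$$R_{i,j} \;=\; R_{i+1,\, F(i+1)+j} \;\sqcup\; R_{i+2,\, j}$$
for $j = 0, 1, \ldots, F(i+2)-1$. Because each such union is disjoint, replacing every $R_{i,j}$ by its two children leaves a disjoint partition of $\mathbb{N}$. The resulting collection consists of the untouched second group $R_{i+1, 0}, \ldots, R_{i+1, F(i+1)-1}$, the new $R_{i+1}$-blocks $R_{i+1, F(i+1)}, \ldots, R_{i+1, F(i+1)+F(i+2)-1}$, and the new $R_{i+2}$-blocks $R_{i+2, 0}, \ldots, R_{i+2, F(i+2)-1}$.

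The key combinatorial observation is that the Fibonacci recursion $F(i+1)+F(i+2) = F(i+3)$ causes the second indices of the $R_{i+1,\cdot}$ blocks to fuse into a single contiguous run $0, 1, \ldots, F(i+3)-1$ with no gaps and no repetitions. Setting $i' = i+1 = (k+1) - 3$, the resulting family is exactly $\mathfrak{P}_{k+1}$, and the total number of parts is $F(i+3) + F(i+2) = F(i+4) = F(k+1)$, closing the induction. The only real obstacle is this bookkeeping of the second indices, but it is forced by the very shape of Definition \ref{dksflpfk}, which was designed precisely so that applying Theorem \ref{bifparts} to the first group yields the next partition in the family.
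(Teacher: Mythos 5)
Your proof is correct and follows essentially the same route as the paper's primary argument, which deduces Theorem \ref{fibonaccipart} from the bifurcation Theorem \ref{bifparts} by observing that passing from $\mathfrak P_k$ to $\mathfrak P_{k+1}$ amounts to splitting each block $R_{i,j}$ of the first group into $R_{i+1,F(i+1)+j}\sqcup R_{i+2,j}$; you merely make explicit the index bookkeeping (the fusion of second indices into $0,\ldots,F(i+3)-1$ via $F(i+1)+F(i+2)=F(i+3)$) that the paper leaves to the reader. (The paper additionally records a second, genuinely different proof by induction on the natural number $m$ using Proposition \ref{indpartfkind}, but that is presented as an alternate argument, not the one you needed.)
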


Before proving the theorem we will note that for $k=1$ and $k=2$ the partitions $\mathfrak P_1$ and $\mathfrak P_2$ have only one part, which is the set $\mathbb N$, and this agrees with the initial conditions in the Fibonacci sequence, that is, $F(1)=F(2)=1$. For $k=3$, obviously, we obtain for $\mathfrak P_3$ a partition of $\mathbb N$ in two parts, which are the ranges of the lower and upper Wythoff sequences, respectively. More specifically, as we already mentioned, $R_{0,0}=A$ and $R_{1,0}=B$.

Now Theorem \ref{fibonaccipart} is a consequence of Theorem \ref{bifparts}. Just observe that each part in the partition $\mathfrak P_{k-1}$ bifurcates as a disjoint union of two parts in $\mathfrak P_k$.

The case when $k=4$ is specially interesting and we will use it later in connection with infinite OAs:

\begin{cor}\label{partt1t2t3} The sets $$R_{1,0}=\{2,5,7,10,13,15,18,20,23,26,\dots\},$$ $$R_{1,1}=\{1,4,6,9,12,14,17,19,22,25,\dots\}$$ and $$R_{2,0}=\{3,8,11,16,21,24,29,32,37,42,\dots\}$$ form a partition of $\mathbb N$.
\end{cor}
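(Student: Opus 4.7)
The plan is to deduce this corollary as the special case $k=4$ of Theorem \ref{fibonaccipart}, which itself follows from iterated application of the bifurcation result Theorem \ref{bifparts}. Since Theorem \ref{fibonaccipart} is already announced immediately above, this is essentially an unpacking of Definition \ref{dksflpfk} for $k=4$.

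Concretely, I would proceed as follows. First, I would recall that $\mathfrak P_3=\{R_{0,0},R_{1,0}\}$ is the classical partition of $\mathbb N$ into the ranges of the lower and upper Wythoff sequences. Next, I would apply Theorem \ref{bifparts} to the part $R_{0,0}$ with $i=0$ and $j=0$, obtaining the disjoint decomposition
\[
R_{0,0}=R_{1,F(1)+0}\cup R_{2,0}=R_{1,1}\cup R_{2,0}.
\]
Substituting this into $\mathbb N=R_{0,0}\sqcup R_{1,0}$ yields
\[
\mathbb N=R_{1,0}\sqcup R_{1,1}\sqcup R_{2,0},
\]
which is exactly the claim. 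Alternatively, I could quote Definition \ref{dksflpfk} with $i=k-3=1$: the partition $\mathfrak P_4$ is the list $R_{1,0},R_{1,1},\ldots,R_{1,F(3)-1},R_{2,0},\ldots,R_{2,F(2)-1}$, and since $F(3)=2$ and $F(2)=1$ this list has exactly the three entries $R_{1,0},R_{1,1},R_{2,0}$, in agreement with $F(4)=3$.

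There is essentially no obstacle; the whole content is bookkeeping. The only point worth being careful about is that the indices appearing in the bifurcation formula match the indices appearing in Definition \ref{dksflpfk}, i.e.\ that when we split $R_{0,0}$ we obtain precisely $R_{1,1}$ (and not some other shifted version), which is why $j$ is shifted by $F(i+1)=F(1)=1$ in Theorem \ref{bifparts}. Once this is checked, the explicit listing of the first few terms given in the statement serves as a sanity check: every small natural number appears in exactly one of the three sets, consistent with a partition.
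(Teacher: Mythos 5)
Your proposal is correct and follows essentially the same route as the paper: the corollary is exactly the case $k=4$ of Theorem \ref{fibonaccipart} (equivalently, Definition \ref{dksflpfk} with $i=1$), obtained by applying the bifurcation Theorem \ref{bifparts} with $i=j=0$ to the part $R_{0,0}$ of the Wythoff partition $\mathbb N=R_{0,0}\sqcup R_{1,0}$, giving $R_{0,0}=R_{1,1}\sqcup R_{2,0}$. Your index bookkeeping ($F(1)=1$, $F(2)=1$, $F(3)=2$, $F(4)=3$) matches the paper's.
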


Now we will give an alternate inductive proof of Theorem \ref{fibonaccipart} different from the bifurcational argumentation that we have just given. We will use in the sequel the inductive relations when we introduce infinite orthogonal arrays in the next section.

To continue with, we prove a technical result that will be needed later on: 
\begin{lem}\label{newvarphim1}
We have the following equations:

\begin{equation}\label{eq:varphimo2}
\lfloor \varphi^{-1}\lfloor \varphi^2 n\rfloor\rfloor=\lfloor\varphi n\rfloor
\ \  \forall n\in\mathbb N.
\end{equation}
\begin{equation}\label{eq:varphimo3}
\lfloor \varphi^{-1}(\lfloor \varphi^2 n\rfloor+1)\rfloor=\lfloor\varphi n\rfloor
\ \  \forall n\in\mathbb N.
\end{equation}

\end{lem}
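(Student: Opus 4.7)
The plan is to exploit the algebraic identities $\varphi^2=\varphi+1$, $\varphi^{-1}=\varphi-1$, and $\varphi^{-2}+\varphi^{-1}=1$, together with the elementary observation that $\{\varphi^2 n\}=\{\varphi n\}$ for every $n\in\mathbb N$, because $\varphi^2 n=\varphi n+n$ differs from $\varphi n$ by an integer (here $\{x\}$ denotes the fractional part of $x$; note that $\varphi n$ and $\varphi^2 n$ are irrational, so their fractional parts lie in the open interval $(0,1)$).

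For (\ref{eq:varphimo2}), I would expand $\lfloor \varphi^2 n\rfloor=\varphi^2 n-\{\varphi n\}$, multiply by $\varphi^{-1}$, and then substitute $\varphi n=\lfloor \varphi n\rfloor+\{\varphi n\}$ to obtain
\[
\varphi^{-1}\lfloor \varphi^2 n\rfloor \;=\; \lfloor \varphi n\rfloor+(1-\varphi^{-1})\{\varphi n\} \;=\; \lfloor \varphi n\rfloor+\varphi^{-2}\{\varphi n\}.
\]
Since $0<\varphi^{-2}\{\varphi n\}<\varphi^{-2}<1$, taking floors gives (\ref{eq:varphimo2}). For (\ref{eq:varphimo3}) it suffices to add $\varphi^{-1}$ to the right-hand side above and to observe that
\[
\varphi^{-2}\{\varphi n\}+\varphi^{-1} \;<\; \varphi^{-2}+\varphi^{-1} \;=\; 1,
\]
so the floor is still $\lfloor \varphi n\rfloor$. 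No step here is genuinely delicate; the whole argument reduces to a one-line manipulation once one notices that $\varphi n$ and $\varphi^2 n$ share the same fractional part, and that the combination $\varphi^{-2}+\varphi^{-1}=1$ makes both identities snap into place in parallel.
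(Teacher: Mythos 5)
Your proof is correct, and for equation (\ref{eq:varphimo2}) it takes a genuinely different route from the paper. The paper proves (\ref{eq:varphimo2}) by rewriting it, via $\varphi^2=\varphi+1$ and $\varphi^{-1}=\varphi-1$, as the Wythoff identity $a(a(n)+n)-a(n)-n=a(n)$ and then invoking parts (ii) and (iii) of Lemma \ref{basicrelwyth} (i.e.\ $a(b(n))=2a(n)+n$ and $b(n)=a(n)+n$, which are imported from the literature), whereas you compute directly: from $\{\varphi^2 n\}=\{\varphi n\}$ you get $\varphi^{-1}\lfloor\varphi^2 n\rfloor=\lfloor\varphi n\rfloor+\varphi^{-2}\{\varphi n\}$, and the floor is read off from $0<\varphi^{-2}\{\varphi n\}<\varphi^{-2}<1$; all the identities you use ($\varphi^{-1}=\varphi-1$, $\varphi^{-1}+\varphi^{-2}=1$, irrationality of $\varphi n$) are sound. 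For (\ref{eq:varphimo3}) the paper's argument is already a fractional-part manipulation ending in $\lfloor\varphi^{-1}(1+\varphi^{-1}(\varphi n-\lfloor\varphi n\rfloor))\rfloor=0$, so there your approach is essentially the same in spirit, though you present it as a one-line continuation of the same computation. What your version buys is self-containedness and uniformity: both identities drop out of the single expression $\lfloor\varphi n\rfloor+\varphi^{-2}\{\varphi n\}$ plus the bound $\varphi^{-2}+\varphi^{-1}=1$, with no appeal to Lemma \ref{basicrelwyth}; what the paper's version buys is consistency with its general strategy of channelling everything through the standard Wythoff-sequence identities $a(a(n))$, $a(b(n))$, $b(n)$, which it reuses repeatedly elsewhere.
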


\begin{proof}
\begin{itemize}
\item Since $\vp^2=\vp+1$ and $\vp^{-1}=\vp-1$, equation \eqref{eq:varphimo2} is equivalent to 
$$a(a(n)+n)-a(n)-n=a(n),
$$
and this is an obvious consequence of parts (ii) and (iii) of Lemma \ref{basicrelwyth}. 
\item As for equation \eqref{eq:varphimo3}, we may prove it similarly to \eqref{eq:varphimo2}:
\begin{equation*}
\begin{split}
\lf\vp^{-1} (\lf\vp^2 n\rf+1)\rf-\lf\vp n\rf 
&=\lf \vp^{-1}(\lf\vp^2n\rf+1-\vp\lf\vp n\rf)\rf \\
&= \lf \vp^{-1}(n+1-\vp^{-1}\lf\vp n\rf)\rf\\
&=\lf \vp^{-1}(1+\vp^{-1}(\vp n-\lf\vp n\rf))\rf\\
&=0.
\end{split}
\end{equation*}
The second equality follows from part (iii) of Lemma \ref{basicrelwyth}, and the last one follows from $\vp n-\lf\vp n\rf\in (0,1)$ (since 
$\vp$ is irrational), and from $1+\vp^{-1}=\vp$.
\end{itemize}
\end{proof}

Before giving the alternate proof of Theorem \ref{fibonaccipart}, let us prove a technical result:

\begin{pro}\label{indpartfkind} If $i,j\in\mathbb Z^{\geq 0}$ then
\begin{enumerate}
\item $f_{i,j}(n)+1=f_{i,j-1}(n)\ \forall n\in\mathbb N$.
\item $f_{i+1,j}(n)+1=f_{i+1,j-1}(n)\ \forall n\in\mathbb N$.
\item $f_{i,0}(a(n))+1=f_{i+1,F(i+1)-1}(n)\ \forall n\in\mathbb N$.
\item $f_{i,0}(b(n))+1=f_{i,F(i+2)-1}(b(n)+1)\ \forall n\in\mathbb N$.
\item $f_{i+1,0}(n)+1=f_{i,F(i+2)-1}(a(n)+1)\ \forall n\in\mathbb N$.
\end{enumerate}
\end{pro}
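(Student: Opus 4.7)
The plan is to dispose of parts (i)--(iii) immediately and to reduce (iv) and (v) to two identities about the lower Wythoff sequence $a$.

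Parts (i) and (ii) are direct from the definition $f_{i,j}(n) = F(i+1)a(n) + F(i)n - j$, since only the constant term depends on $j$. Part (iii) follows from Identity \eqref{theoid1} of Theorem \ref{bifparts}: substituting $j = -1$ there gives $f_{i+1, F(i+1)-1}(n) = f_{i,-1}(a(n))$, which by part (i) equals $f_{i,0}(a(n)) + 1$.

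For parts (iv) and (v), I would first isolate the two auxiliary claims
\begin{equation*}
\text{(a) } a(b(n)+1) = a(b(n))+1, \qquad \text{(b) } a(a(n)+1) = a(a(n))+2.
\end{equation*}
Granting (a), expand
\[
f_{i, F(i+2)-1}(b(n)+1) = F(i+1)\,a(b(n)+1) + F(i)(b(n)+1) - F(i+2) + 1,
\]
plug in (a) and use $F(i+2) = F(i+1) + F(i)$ to collapse the right-hand side to $F(i+1)a(b(n)) + F(i)b(n) + 1 = f_{i,0}(b(n)) + 1$, which is (iv). Part (v) is analogous: expand $f_{i,F(i+2)-1}(a(n)+1)$, use (b) together with Lemma~\ref{basicrelwyth}(i) to rewrite $a(a(n)+1) = a(n)+n+1$, and the same Fibonacci identity yields $F(i+2)a(n) + F(i+1)n + 1 = f_{i+1,0}(n)+1$.

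It then remains to prove (a) and (b). Writing $a(n) = n\varphi - \{n\varphi\}$ and using $\varphi^2 = \varphi + 1$ one obtains $a(n)\varphi = a(n) + n - \{n\varphi\}/\varphi$; since $\lfloor a(n)\varphi \rfloor = a(a(n)) = a(n) + n - 1$ by Lemma~\ref{basicrelwyth}(i), the fractional part $\delta := \{a(n)\varphi\}$ equals $1 - \{n\varphi\}/\varphi$ and therefore lies in $(1/\varphi^2,\,1)$. For (b), note that $(a(n)+1)\varphi = a(n)\varphi + \varphi$; using the identity $\varphi + 1/\varphi^2 = 2$ one checks $\delta + \varphi \in (2,\, \varphi^2) \subset (2,3)$, so the floor jumps by exactly $2$. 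For (a), a further manipulation gives $b(n)\varphi = 2a(n) + n + \{n\varphi\}/\varphi^2$, so $\{b(n)\varphi\} = \{n\varphi\}/\varphi^2 \in (0,\, 1/\varphi^2)$, and hence $\{b(n)\varphi\} + \varphi$ lies in $(\varphi,\, 2)$, whose floor is $1$.

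The main obstacle will be the fractional-part bookkeeping used to pin down the exact floor jumps in (a) and (b); everything else is formal manipulation using $F(i+2) = F(i+1) + F(i)$ and Lemma~\ref{basicrelwyth}.
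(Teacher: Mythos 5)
Your proposal is correct and takes essentially the same route as the paper: (i)--(iii) are formal, and (iv)--(v) are reduced to the unit-jump identities $a(b(n)+1)=a(b(n))+1$ and $a(a(n)+1)=a(a(n))+2$, which the paper establishes via Lemma \ref{newvarphim1} and a fractional-part estimate, exactly in the spirit of your direct bookkeeping with $\{a(n)\varphi\}\in(1/\varphi^2,1)$ and $\{b(n)\varphi\}\in(0,1/\varphi^2)$, both of which check out. The only cosmetic differences are that you obtain (iii) from identity (\ref{theoid1}) of Theorem \ref{bifparts} with $j=-1$ (legitimate, since that identity holds for all $j\in\mathbb Z$) instead of redoing the algebra, and that you re-prove the jump facts directly rather than quoting Lemma \ref{newvarphim1}.
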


\begin{proof}
The proof of (i) follows directly from the definitions, and (ii) is a special case of (i) when the first index is at least $1$.
 
In order to establish (iii) we have to prove:
\[F(i+1)a(a(n))+F(i)a(n)=F(i+2)a(n)+F(i+1)n-F(i+1),
\]
which is equivalent to 
\[F(i+1)(a(a(n))-a(n))=F(i+1)(n-1),
\]
which is true by part (i) of Lemma \ref{basicrelwyth}.

In order to establish (iv) we have to prove:
\[F(i+1)a(b(n))+F(i)b(n)=F(i+1)a(b(n)+1)+F(i)(b(n)+1)-F(i+2),
\]
which is equivalent to proving that
\[\label{cevp}\lf\vp\lf\vp^2n\rf-\lf\vp(\lf\vp^2n\rf+1)\rf+1\rf=0.
\]
If we put $\vp=1+\vp^{-1}$, this 
is equivalent to

\[\lf\vp^{-1}(\lf\vp^2n\rf+1)\rf=\lf\vp^{-1}\lf\vp^2n\rf\rf,\]

which follows from Lemma \ref{newvarphim1}.

Demonstrating (v) is equivalent to proving:
\[F(i+2)a(n)+F(i+1)n=F(i+1)a(a(n)+1)+F(i)(a(n)+1)-F(i+2).\]
This follows from:
\[a(a(n)+1)-n=a(n)+1,
\]
which follows from:
\begin{equation*}
\begin{split}
a(a(n)+1)-n&=\lf\vp(\lf\vp n\rf+1)\rf-n\\&=
\lf\vp n\rf+1+ \lf\vp^{-1}(\lf\vp n\rf+1)\rf-n\\&=
\lf\vp n\rf+1=a(n)+1,
\end{split}
\end{equation*}
where the penultimate inequality is an easy consequence of the fact that, if $u_n=\varphi n-\lf\varphi n\rf$, then $0<\varphi^{-1}(1-u_n)<1$.
\end{proof}

The following corollary of the previous proposition will be used later in another section when we study infinite orthogonal arrays:

\begin{cor}\label{cortripart}
The following identities hold:
$$f_{1,1}(n)+1=f_{1,0}(n)\ \forall n\in\N.$$
$$f_{1,0}(a(n))+1=f_{2,0}(n)\ \forall n\in\N.$$ $$f_{1,0}(b(n))+1=f_{1,1}(b(n)+1)\ \forall n\in\N.$$  $$f_{2,0}(n)+1=f_{1,1}(a(n)+1)\ \forall n\in\N.$$
\end{cor}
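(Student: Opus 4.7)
The plan is to derive each of the four identities as a direct specialization of Proposition \ref{indpartfkind}, choosing the appropriate indices and using the Fibonacci values $F(2)=1$ and $F(3)=2$.

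For the first identity, $f_{1,1}(n)+1=f_{1,0}(n)$, I would simply invoke part (i) of the proposition with $i=1$ and $j=1$ (equivalently, part (ii) with $i=0$ and $j=1$); this is immediate from the defining formula $f_{i,j}(n)=F(i+1)a(n)+F(i)n-j$, since decreasing $j$ by $1$ increases the value by $1$.

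For the second identity, $f_{1,0}(a(n))+1=f_{2,0}(n)$, I would apply part (iii) of the proposition with $i=1$, yielding $f_{1,0}(a(n))+1=f_{2,F(2)-1}(n)$, and then use $F(2)=1$ to simplify the second index to $0$. For the third identity, $f_{1,0}(b(n))+1=f_{1,1}(b(n)+1)$, I would apply part (iv) with $i=1$, obtaining $f_{1,0}(b(n))+1=f_{1,F(3)-1}(b(n)+1)$, and then use $F(3)=2$ to reduce the second index to $1$. For the fourth identity, $f_{2,0}(n)+1=f_{1,1}(a(n)+1)$, I would apply part (v) with $i=1$, obtaining $f_{2,0}(n)+1=f_{1,F(3)-1}(a(n)+1)$, and again use $F(3)=2$ to conclude.

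There is no real obstacle here; the only thing to check carefully is that the Fibonacci indices in parts (iii)–(v) of Proposition \ref{indpartfkind} collapse correctly when $i=1$, which they do because the small Fibonacci values $F(2)=1$ and $F(3)=2$ are exactly what is needed to match the right-hand sides of the four stated identities. The proof is therefore a short bookkeeping exercise invoking the proposition four times with $i=1$ and the relevant value of $j$.
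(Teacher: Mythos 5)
Your proposal is correct and matches the paper's approach: the corollary is exactly the specialization of Proposition \ref{indpartfkind} to $i=1$ (parts (i), (iii), (iv), (v)), with the Fibonacci values $F(2)=1$ and $F(3)=2$ collapsing the second indices as you describe. Nothing further is needed.
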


Now we will give the alternate proof of the theorem:

\begin{proof}[Proof of Theorem \ref{fibonaccipart}]
For $k=1,2$ the result is trivial. Let us now prove the result for $k\geq3$.
 We are going to prove by induction on $m$ that $m$ appears exactly once in the sets of $\mathfrak P_k$. 
\begin{itemize}
\item The base case $m=1$ is trivial. Indeed, $a(1)=1$, so we can prove easily that $1\in R_{i,F(i+2)-1}$, and that that is the only set in $\mathfrak P_k$ in which it appears. Indeed,
\[f_{i,F(i+2)-1}(1)=F(i+1)+F(i)-F(i+2)+1=1.
\]
To prove the uniqueness, it is worth remarking that:
\[f_{i+1,F(i+1)-1}(1)=F(i+2)+1>1,
\]
so $1$ is only in $R_{i,F(i+2)-1}$.
\item Let us suppose that $m-1$ appears in exactly one set and prove that $m$ appears in exactly one set:
\begin{itemize}
\item If $m-1$ belongs only to $R_{i,j}$ (for some $j\geq 1$), it suffices to use (i) in Proposition \ref{indpartfkind} to obtain that $m\in R_{i,j-1}$. Moreover, because Wythoff sequences are a partition and because of items (ii)-(v), we obtain that $R_{i,j-1}$ is the only set to which $m$ belongs. 
\item If $m-1$ belongs only to $R_{i+1,j}$ (for some $j\geq 1$), it suffices to use (ii) in Proposition \ref{indpartfkind}. to obtain that $m\in R_{i+1,j-1}$. Moreover, because Wythoff sequences are a partition and because of items (i) and (iii)-(v), we obtain that $R_{i+1,j-1}$ is the only set to which $m$ belongs. 
\item If $m-1$ belongs only to $R_{i,0}$, it suffices to use (iii) and (iv) in Proposition \ref{indpartfkind}, and the fact that Wythoff numbers are a partition to obtain that either $m\in R_{i+1,F(i+1)-1}$ or $m\in R_{i,F(i+2)-1}$. The uniqueness can be proved as in the previous two items.
\item If $m-1$ belongs only to $R_{i+1,0}$, it suffices to use (v) in Proposition \ref{indpartfkind} to obtain that $m\in R_{i,F(i+2)-1}$. The uniqueness can be proved as in the previous items. 
\end{itemize}
\end{itemize}
\end{proof}

Obviously, the bifurcations established in Theorem \ref{bifparts} can originate also other partitions of $\mathbb N$ different from the ones in Theorem \ref{fibonaccipart}. For instance, in $\mathfrak P_4$ we can bifurcate only $R_{1,0}=R_{2,1}\cup R_{3,0}$, so that we obtain the partition of $\mathbb N$ in the following four parts:

$$R_{2,1}=\{2,7,10,15,20,23,28,31,36,41,\dots\},$$

$$R_{3,0}=\{5,13,18,26,34,39,47,52,60,68,\dots\},$$

$$R_{1,1}=\{1,4,6,9,12,14,17,19,22,25,\dots\},$$

$$R_{2,0}=\{3,8,11,16,21,24,29,32,37,42,\dots\},$$

which is somewhere between $\mathfrak P_4$ and $\mathfrak P_5$.

This example, and the partitions in Theorem \ref{fibonaccipart}, among others, are partitions that originate from bifurcations of the sets $R_{i,j}$ and that, when viewed from the opposite point of view, can be contracted to the trivial partition of $\mathbb N$ with only one part, by doing fussions of the $R_{i,j}$. We will call fussionable to these partitions. In particular, the sequence of partitions in Theorem \ref{fibonaccipart} has no unbifurcated parts, and every part is bifurcated sooner than latter.

The building blocks $R_{i,j}$ can originate partitions in a more interesting and defiant way that through bifurcations, in the family of partitions that we will introduce here. In this case they will partition sets of the form $\{n\in\mathbb N\mid n\geq F(k)\}$ for a fixed natural number $k$, that is, the set of natural numbers greater than or equal to $F(k)$ will be the disjoint union of the parts.

\begin{dfn} If $k\in\mathbb N$, we will call $k$-th standard Fibonacci-like partition of the second kind $\mathfrak P^{\prime}_k$ to $\{R_{i,j}\}$ with $k-3\leq i\leq k-2$ and $-F(i+1)\geq j\geq 1-F(k)$
\end{dfn}

The proof of the following theorem and the subsequent corollary can be done in a similar way as for the standard Fibonacci-like partitions of the first kind, and will be omitted.

\begin{theo} Given $k\in\mathbb N$, if $k-3\leq i\leq k-2$ and $-F(i+1)\geq j\geq 1-F(k)$ then
\begin{enumerate}
\item $f_{i,j}(n)+1=f_{i,j-1}(n)$ for all $i,j$ with $k-3\leq i\leq k-2$ and $-F(i+1)\geq j\geq 2-F(k)$ and $\forall n\in\mathbb N$.
\item $f_{k-3,1-F(k)}(a(n))+1=f_{k-2,-F(k-1)}(n)\ \forall n\in\mathbb N$.
\item $f_{k-3,1-F(k)}(b(n))+1=f_{k-3,-F(k-2)}(b(n)+1)\ \forall n\in\mathbb N$.
\item $f_{k-2,1-F(k)}(n)+1=f_{k-3,-F(k-2)}(a(n)+1)\ \forall n\in\mathbb N$.
\end{enumerate}
\end{theo}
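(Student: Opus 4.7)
The proof plan is to imitate, part by part, the argument of Proposition \ref{indpartfkind}, reducing each assertion to an identity in Fibonacci numbers and values of $a$ and $b$ that can then be validated by Lemma \ref{basicrelwyth} together with Lemma \ref{newvarphim1}. Assertion (1) is purely formal: from the definition $f_{i,j}(n)=F(i+1)a(n)+F(i)n-j$, adding $1$ replaces $-j$ by $-(j-1)$, and the hypothesis $-F(i+1)\geq j\geq 2-F(k)$ guarantees that both $j$ and $j-1$ stay inside the admissible range $[1-F(k),-F(i+1)]$.

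For assertion (2), I would set $i=k-3$, so that $k-2=i+1$, $F(k)=F(i+3)$, and $F(k-1)=F(i+2)$. Expanding both sides and cancelling the common summands, the claim becomes
\[
F(i+1)a(a(n))+F(i)a(n)+F(i+3)=F(i+2)a(n)+F(i+1)n+F(i+2).
\]
Using $F(i+3)-F(i+2)=F(i+1)$ and $F(i+2)-F(i)=F(i+1)$, this collapses to $a(a(n))=a(n)+n-1$, which is Lemma \ref{basicrelwyth}(i).

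Working again with $i=k-3$, assertion (3) has the term $F(i)b(n)$ on both sides; after cancelling it and collecting the Fibonacci constants through the recursion $F(i+3)-F(i+2)=F(i+1)$, the required identity reduces to $a(b(n)+1)=a(b(n))+1$. This is precisely the fact extracted from Lemma \ref{newvarphim1} inside the proof of Proposition \ref{indpartfkind}(iv): comparing (\ref{eq:varphimo2}) and (\ref{eq:varphimo3}) yields $\lf \vp^{-1}(\lf \vp^2 n\rf+1)\rf=\lf \vp^{-1}\lf \vp^2 n\rf\rf$, from which the displayed equality follows immediately. In the same spirit, a parallel expansion of assertion (4) reduces it to $a(a(n)+1)=a(n)+n+1$, which is exactly the identity proved in the closing lines of Proposition \ref{indpartfkind}(v), using Lemma \ref{basicrelwyth}(iii) together with the observation that $\vp^{-1}(1-(\vp n-\lf \vp n\rf))\in(0,1)$.

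The only genuine obstacle is the bookkeeping: tracking the index shifts $1-F(k)$, $-F(k-1)$, $-F(k-2)$ under the substitution $i=k-3$, and verifying that the Fibonacci combinations that arise simplify correctly through the defining recursion. Since each of the reduced identities already appears either in Lemma \ref{basicrelwyth} or inside the proof of Proposition \ref{indpartfkind}, no new arithmetic input beyond careful index accounting is required, which accounts for why the proof may be safely omitted.
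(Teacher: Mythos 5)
Your proposal is correct and follows exactly the route the paper intends: the paper omits this proof, stating it can be done "in a similar way" to Proposition \ref{indpartfkind}, and your part-by-part reductions (to $a(a(n))=a(n)+n-1$, $a(b(n)+1)=a(b(n))+1$ via Lemma \ref{newvarphim1}, and $a(a(n)+1)=a(n)+n+1$) are the right ones and check out after the Fibonacci index bookkeeping with $i=k-3$.
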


\begin{cor}\label{flip} A Fibonacci-like partition of the second kind gives a partition in $F(k)$ parts of the set $\{n\in\mathbb N\mid n\geq F(k)\}$.
\end{cor}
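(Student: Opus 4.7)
The plan is to mimic the alternate, inductive proof of Theorem \ref{fibonaccipart}, using items (i)--(iv) of the preceding theorem as substitutes for the role played there by Proposition \ref{indpartfkind}. The counting step is immediate: for $i=k-3$ the index $j$ takes the $F(k-1)$ values in $\{1-F(k),\ldots,-F(k-2)\}$ and for $i=k-2$ it takes the $F(k-2)$ values in $\{1-F(k),\ldots,-F(k-1)\}$, giving $F(k-1)+F(k-2)=F(k)$ parts in total.

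For the base case of the induction I would show that every element of every part of $\mathfrak P'_k$ is at least $F(k)$, with equality attained exactly once. Since $f_{i,j}(n)=F(i+1)a(n)+F(i)n-j$ is strictly increasing in $n$ and strictly decreasing in $j$, on each row $i\in\{k-3,k-2\}$ the minimum over the admissible parameters is attained at $n=1$, $j=-F(i+1)$, and equals $2F(i+1)+F(i)=F(i+3)$. This gives $F(k)$ on row $k-3$ and $F(k+1)>F(k)$ on row $k-2$, so $F(k)$ appears only as $f_{k-3,-F(k-2)}(1)$, and no smaller positive integer occurs in any part.

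The inductive step, for $m\geq F(k)+1$, mirrors the four-case argument in the proof of Theorem \ref{fibonaccipart}. Assuming $m-1$ lies in exactly one part: if $m-1\in R_{i,j}$ with $j\geq 2-F(k)$, item (i) yields $m\in R_{i,j-1}$; if $m-1=f_{k-3,1-F(k)}(n)$, then by the Wythoff partition $\mathbb N=A\cup B$ either $n=a(n')$, in which case (ii) gives $m=f_{k-2,-F(k-1)}(n')$, or $n=b(n'')$, in which case (iii) gives $m=f_{k-3,-F(k-2)}(b(n'')+1)$; and if $m-1\in R_{k-2,1-F(k)}$, item (iv) gives $m\in R_{k-3,-F(k-2)}$. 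Uniqueness transfers to $m$ because each $f_{i,j}$ is injective and the displayed identities are bijections onto their stated images, so two distinct pre-images for $m$ would force two distinct pre-images for $m-1$.

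The main obstacle I anticipate is the boundary handling at $j=1-F(k)$. I need to check not just the minimum-value computation for the base case but also that the three ``jump'' identities (ii)--(iv) together account exhaustively for every way the predecessor $m-1$ can sit in the boundary rows $R_{k-3,1-F(k)}$ and $R_{k-2,1-F(k)}$, so that shifting by $+1$ from those boundaries never escapes the collection and never revisits a previously filled cell; this is the place where the structure of the second-kind partition genuinely differs from the first-kind case and where a careless bookkeeping would fail.
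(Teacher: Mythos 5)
Your proposal is correct and is essentially the argument the paper intends: the paper omits the proof of Corollary~\ref{flip}, stating only that it proceeds as for the first-kind partitions, and your induction on $m$ using items (i)--(iv) of the preceding theorem (with the Wythoff split $\mathbb N=A\cup B$ handling the boundary row $j=1-F(k)$) is exactly that adaptation. Your explicit base-case computation showing the minimum value $F(i+3)$ on each row, so that $F(k)$ appears exactly once and nothing smaller appears, correctly supplies the one detail where the second-kind case differs from Theorem~\ref{fibonaccipart}.
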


\begin{ex}\label{k5flpsk} For $k=5$, the Fibonacci-like partition of the second kind $\mathfrak P^{\prime}_5$ of $\{n\in\mathbb N\mid n\geq 5\}$ in $5$ parts is formed by the sets

$$R_{2,-2}=\{5, 10, 13, 18, 23, 26, 31, 34, 39, 44,\dots\},$$

$$R_{2,-3}=\{6, 11, 14, 19, 24, 27, 32, 35, 40, 45,\dots\},$$

$$R_{2,-4}=\{7, 12, 15, 20, 25, 28, 33, 36, 41, 46,\dots\},$$

$$R_{3,-3}=\{8, 16, 21, 29, 37, 42, 50, 55, 63, 71,\dots\},$$

$$R_{3,-4}=\{9, 17, 22, 30, 38, 43, 51, 56, 64, 72,\dots\},$$

and the following identity holds for the corresponding sequences:

$$f_{2,-2}(n)+1=f_{2,-3}(n),$$

$$f_{2,-3}(n)+1=f_{2,-4}(n),$$

$$f_{2,-4}(a(n))+1=f_{3,-3}(n),$$

$$f_{2,-4}(b(n))+1=f_{2,-2}(b(n)+1),$$

$$f_{3,-3}(n)+1=f_{3,-4}(n),$$

$$f_{3,-4}(n)+1=f_{2,-2}(a(n)+1).$$

\end{ex}

\section{Permutations based on $R_{i,j}$-partitions}\label{permpart}

We will consider piecewise permutations $\pi$ of the set of natural numbers based on $R_{i,j}$-partitions, which we will call $R_{i,j}$-permutations. The permutations will be defined piecewise on the $R_{i,j}$, and will be of the form $\lfloor (a\varphi +b)n+c\rfloor$ on that piece. We will codify the permutations as sets of $5$-tuples, where $(a,b,c,i,j)$ means $$\pi(n)=\lfloor (a\varphi +b)n+c\rfloor\text{ whenever }n\in R_{i,j}.$$ We can obtain many permutations in the symmetric group $S_{\mathbb N}$ in that way, both torsion elements (that is, of finite order) and torsion free elements (that is, of infinite order). We will show also, for convenience of the reader, the first $20$ terms of the $R_{i,j}$-permutations that we will consider.

In this section, juxtaposition of mappings will denote its composition.

Although the proof of the theorems in this section can be done using standard identities, they would be long and tedious for the reader, and therefore we have done them using the automatic prover software Walnut \cite{walnut}, made by Hamoon Mousavi, which is a free software for deciding first-order statements phrased in the Bucchi arithmetic, which is an extension of Presburger arithmetic. The reader can find more information about the use of Walnut to prove properties of sequences in \cite{shall1} and \cite{shall2}. The corresponding scripts can be found in the repository at \cite{ehubox}. The name of the script for each proof is indicated after each statement. When running the programs, the file \lq phin.txt\rq\ in the mentioned repository, which defines the automata to calculate the lower Wythoff sequence, must be in the \lq Automata Library\rq\ directory of the Walnut installation address.

In the evaluation of the $\pi(n)$ with Walnut when $a<0$ we have taken into account that, if $n\in\mathbb Z^{\geq 0}$, then $\varphi(-n)=-\varphi(n)+1$.

We will begin with some torsion elements of orders $2,4,6$.

\begin{theo}
The sequence $f=2,1,5,7,3,10,4,13,15,6,18,20,8,23,9,26,28,11,31,12,\dots$ defined by 

$$f(n)=\begin{cases} \lfloor \varphi n\rfloor +1, & \text{if }n\in R_{0,0}\\ \lfloor (\varphi -1)n\rfloor, & \text{ if }n\in R_{1,0}\end{cases}$$

is a $R_{i,j}$-permutation of order $2$.
\end{theo}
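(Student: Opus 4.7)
The plan is to show that $f$ acts as the involution which, for each $m\in\mathbb{N}$, interchanges $a(m)\in R_{0,0}$ with $b(m)\in R_{1,0}$. By Beatty's theorem (as recalled in the introduction), $R_{0,0}=\{a(m):m\in\mathbb{N}\}$ and $R_{1,0}=\{b(m):m\in\mathbb{N}\}$ partition $\mathbb{N}$, so the claimed pairing automatically makes $f$ a bijection of $\mathbb{N}$ with $f^2=\mathrm{id}$. Checking $f(1)=2\neq 1$ will then rule out order $1$, yielding order exactly $2$.

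First I would treat the case $n\in R_{0,0}$, writing uniquely $n=a(m)$. Then
$$f(a(m))=\lfloor\varphi\,a(m)\rfloor+1=a(a(m))+1.$$
Part (i) of Lemma \ref{basicrelwyth} gives $a(a(m))=a(m)+m-1$, and part (iii) identifies $a(m)+m$ with $b(m)$. So $f(a(m))=b(m)\in R_{1,0}$.

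Next I would treat the case $n\in R_{1,0}$, writing $n=b(m)$ with $m$ unique. Using the identities $\varphi-1=\varphi^{-1}$ and $b(m)=\lfloor\varphi^2 m\rfloor$, I obtain
$$f(b(m))=\lfloor(\varphi-1)b(m)\rfloor=\lfloor\varphi^{-1}\lfloor\varphi^2 m\rfloor\rfloor=\lfloor\varphi m\rfloor=a(m),$$
where the third equality is precisely equation \eqref{eq:varphimo2} of Lemma \ref{newvarphim1}. Thus $f(b(m))=a(m)\in R_{0,0}$.

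Combining the two cases, $f$ swaps $a(m)\leftrightarrow b(m)$ for every $m\in\mathbb{N}$, so $f^2=\mathrm{id}_{\mathbb{N}}$ and $f$ is a permutation whose order divides $2$; since $f(1)=2$, the order is exactly $2$. There is no real obstacle here beyond bookkeeping: the substantive content is the observation that the two lemmas already available in the paper are exactly tailored to collapse $\lfloor\varphi a(m)\rfloor+1$ and $\lfloor(\varphi-1)b(m)\rfloor$ into a matching pair; the mild point to watch is ensuring the $+1$ on the $R_{0,0}$ branch is necessary (without it, part (i) of Lemma \ref{basicrelwyth} would produce $b(m)-1$ instead of $b(m)$, so the involution would fail).
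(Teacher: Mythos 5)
Your proof is correct, but it is not the route the paper takes: the paper offers no hand proof of this theorem at all, instead delegating it (like the other statements in Section \ref{permpart}) to the automated prover Walnut via the script walnut\textunderscore script\textunderscore theorem\textunderscore 4.1.txt, with the remark that a proof by standard identities ``would be long and tedious.'' Your argument shows that, at least for this particular permutation, that is not so: identifying $R_{0,0}=A$ and $R_{1,0}=B$, the computation $f(a(m))=a(a(m))+1=a(m)+m=b(m)$ via parts (i) and (iii) of Lemma \ref{basicrelwyth}, and $f(b(m))=\lfloor\varphi^{-1}\lfloor\varphi^2 m\rfloor\rfloor=a(m)$ via equation \eqref{eq:varphimo2} of Lemma \ref{newvarphim1}, exhibits $f$ as the involution swapping $a(m)\leftrightarrow b(m)$, and Beatty's theorem makes this a genuine permutation of $\mathbb N$ of order exactly $2$ (since $f(1)=2$). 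What your approach buys is a conceptual description of $f$ rather than a machine-checked verification, and it dovetails with what the paper does later: in Section \ref{m2} the same sequence reappears as $\{q_j\}$ (part (vii) of Lemma \ref{mainlemmaq2}), where the paper notes it is A002251, ``obtained by swapping $a(k)$ and $b(k)$,'' and proves the involution property by a proposition whose proof is exactly the Lemma \ref{basicrelwyth} computation you carried out; what the Walnut approach buys is uniformity, since the same method disposes of the higher-order permutations $g,h,i,j$ of that section, for which hand computations would indeed be heavier. Your closing remark about the necessity of the $+1$ is accurate, and there is no gap in your argument.
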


Proof: file walnut\textunderscore script\textunderscore theorem\textunderscore 4.1.txt\newline

With the notation of $5$-tuples $f$ is determined by $(1,0,1,0,0),(1,-1,0,1,0)$.\newline

This sequence $f$ will play an important role in the next sections.

\begin{theo}The sequence $g=2,1,4,3,6,5,9,12,7,14,17,8,19,10,22,25,11,27,13,30,\dots$ defined by the $5$-tuples

$(1,0,0,2,0),(1,0,-2,1,0),(1,-1,2,2,2),(1,-1,1,3,1)$

is a $R_{i,j}$-permutation of order $2$.
\end{theo}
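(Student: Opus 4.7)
The plan is to show that $g$ pairs up the four pieces of the partition $\{R_{1,0}, R_{2,0}, R_{2,2}, R_{3,1}\}$ (the partition of $\mathbb{N}$ established in the example following Theorem \ref{bifparts}) into two swapped pairs, $R_{2,0}\leftrightarrow R_{3,1}$ and $R_{1,0}\leftrightarrow R_{2,2}$, where each swap is implemented by $f_{i,j}(m)\leftrightarrow f_{i',j'}(m)$ at the same parameter $m$. Once this is established, $g\circ g$ acts as the identity on each piece, so $g$ is an involution; since $g(1)=2\ne 1$, the order is exactly $2$.

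First I would treat the pair $R_{2,0}\leftrightarrow R_{3,1}$. Writing $n=f_{2,0}(m)=2a(m)+m$, part (ii) of Lemma \ref{basicrelwyth} gives $n=a(b(m))$, and then $g(n)=a(n)=a(a(b(m)))$; applying part (i) of Lemma \ref{basicrelwyth} yields
\[
a(a(b(m)))=a(b(m))+b(m)-1=3a(m)+2m-1=f_{3,1}(m),
\]
so $g(R_{2,0})\subseteq R_{3,1}$. Conversely, for $n=f_{3,1}(m)=a(a(b(m)))$, I use the elementary identity $\lfloor\varphi^{-1}a(k)\rfloor=k-1$ for every $k\in\mathbb{N}$ (an immediate consequence of $\varphi k-1<a(k)<\varphi k$, valid since $\varphi$ is irrational); this gives $\lfloor\varphi^{-1}n\rfloor=a(b(m))-1$, whence $g(n)=a(b(m))=2a(m)+m=f_{2,0}(m)$.

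Next I would handle $R_{1,0}\leftrightarrow R_{2,2}$. For $n=f_{1,0}(m)=b(m)$, Lemma \ref{basicrelwyth} gives $g(n)=a(b(m))-2=2a(m)+m-2=f_{2,2}(m)$ directly. Conversely, for $n=f_{2,2}(m)=a(b(m))-2$, one must show
\[
\lfloor\varphi^{-1}(a(b(m))-2)\rfloor=b(m)-2,
\]
so that $g(n)=b(m)=f_{1,0}(m)$. From $\varphi b(m)-1<a(b(m))<\varphi b(m)$ (using irrationality of $\varphi b(m)$), dividing by $\varphi$ gives $\varphi^{-1}a(b(m))\in(b(m)-\varphi^{-1},b(m))$, and therefore
\[
\varphi^{-1}(a(b(m))-2)\in\bigl(b(m)-3\varphi^{-1},\,b(m)-2\varphi^{-1}\bigr).
\]
Since $3/2<\varphi<2$ we have $1<2\varphi^{-1}<3\varphi^{-1}<2$, so this open interval lies strictly inside $(b(m)-2,b(m)-1)$, forcing the floor to equal $b(m)-2$.

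The main obstacle is the last floor identity: unlike the $R_{2,0}\leftrightarrow R_{3,1}$ swap (which follows directly from Lemma \ref{basicrelwyth}), the $R_{2,2}\to R_{1,0}$ direction requires the quantitative estimate $1<2\varphi^{-1}<3\varphi^{-1}<2$ placing $\varphi^{-1}(n+2-2)$ in the correct unit interval. Such irrational-shift floor inequalities are exactly the kind of statements that the Walnut decision procedure for Fibonacci-automatic sequences dispatches mechanically, which explains the authors' preference for a machine-checked proof over the direct interval analysis sketched above.
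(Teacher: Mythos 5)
Your argument is correct, but it is not the route the paper takes: the paper's ``proof'' of this theorem, like all theorems in Section \ref{permpart}, is a citation of a Walnut script (the authors explicitly delegate to the automatic prover because a proof by standard identities ``would be long and tedious''). You instead give a short hand proof. You decode the four $5$-tuples correctly ($g(n)=a(n)$ on $R_{2,0}$, $a(n)-2$ on $R_{1,0}$, $\lfloor\varphi^{-1}n\rfloor+2$ on $R_{2,2}$, $\lfloor\varphi^{-1}n\rfloor+1$ on $R_{3,1}$), use the partition $\{R_{1,0},R_{2,0},R_{2,2},R_{3,1}\}$ established in the example after Theorem \ref{bifparts} so that $g$ is well defined, and show that $g$ realizes the parameter-preserving swaps $f_{2,0}(m)\leftrightarrow f_{3,1}(m)$ and $f_{1,0}(m)\leftrightarrow f_{2,2}(m)$. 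I checked the four verifications: the identities $f_{2,0}(m)=a(b(m))$, $f_{3,1}(m)=a(a(b(m)))=a(b(m))+b(m)-1$ and $f_{2,2}(m)=a(b(m))-2$ follow from Lemma \ref{basicrelwyth} exactly as you say; your auxiliary fact $\lfloor\varphi^{-1}a(k)\rfloor=k-1$ is valid since $\varphi k-1<a(k)<\varphi k$ and $0<\varphi^{-1}<1$; and the interval estimate placing $\varphi^{-1}\bigl(a(b(m))-2\bigr)$ in $\bigl(b(m)-3\varphi^{-1},\,b(m)-2\varphi^{-1}\bigr)\subset(b(m)-2,b(m)-1)$ via $1<2\varphi^{-1}<3\varphi^{-1}<2$ is sound. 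Since the four blocks cover $\mathbb{N}$, $g^2=\mathrm{id}$ on each block, so $g$ is an involution (hence a permutation), and $g(1)=2$ gives order exactly $2$. Comparing the two approaches: yours exposes the structural reason the permutation has order $2$ (it pairs blocks at equal parameter values) and is verifiable by a reader with no software, whereas the paper's Walnut route trades this insight for uniform mechanical certification of every permutation identity in the section without any case-by-case floor analysis.
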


Proof: file walnut\textunderscore script\textunderscore theorem\textunderscore 4.2.txt\newline

\begin{theo} The sequence $h=2,3,4,1,8,5,11,12,7,16,17,6,21,10,24,25,9,29,13,32,\dots$ defined by

$(1,0,0,2,0),(1,0,0,1,0),(1,-1,2,2,2),(1,-1,-1,3,1)$

is a $R_{i,j}$-permutation, and it is of order $4$.

Its powers are $R_{i,j}$-permutations defined by

$h^2=3,4,1,2,12,8,17,6,11,25,9,5,33,16,38,14,7,46,21,51,\dots$

$(1,1,-1,1,0),(0,1,2,2,2),(0,1,-2,2,0),(-1,2,1,3,1)$.

$h^3=h^{-1}=4,1,2,3,6,12,9,5,17,14,7,8,19,25,22,10,11,27,33,30,\dots$

$(1,0,3,2,2),(1,0,-2,1,0),(1,-1,1,3,1),(1,-1,1,2,0)$.

\end{theo}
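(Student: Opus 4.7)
The plan is to exploit the partition $\mathbb{N}=R_{1,0}\sqcup R_{2,0}\sqcup R_{2,2}\sqcup R_{3,1}$ established in the example after Theorem \ref{bifparts}, and to show that $h$ cyclically permutes these four blocks in an index-preserving way. Concretely, I aim to prove the four identities
\begin{align*}
h(f_{1,0}(m)) &= f_{2,0}(m), & h(f_{2,0}(m)) &= f_{3,1}(m),\\
h(f_{3,1}(m)) &= f_{2,2}(m), & h(f_{2,2}(m)) &= f_{1,0}(m),
\end{align*}
for every $m\in\mathbb{N}$. Because $m\mapsto f_{i,j}(m)$ enumerates $R_{i,j}$ bijectively, these identities immediately imply that $h$ is a bijection of $\mathbb{N}$ with $h^4=\mathrm{id}$, and they let us compute $h^2$ and $h^3$ piecewise by composition.

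The first two identities are the easy ones, since $h$ acts as $a$ on both $R_{1,0}$ and $R_{2,0}$. For $n=f_{1,0}(m)=b(m)$, Lemma \ref{basicrelwyth}(ii) gives directly $a(b(m))=2a(m)+m=f_{2,0}(m)$. For $n=f_{2,0}(m)=a(b(m))$, Lemma \ref{basicrelwyth}(i) then yields $a(a(b(m)))=a(b(m))+b(m)-1=3a(m)+2m-1=f_{3,1}(m)$, purely algebraically.

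The main obstacle will be the two pieces on which $h(n)=\lfloor(\varphi-1)n+c\rfloor$. Here I would write $a(m)=\varphi m-u_m$ with $u_m\in(0,1)$, substitute the explicit form of $n$ into $(\varphi-1)n+c$, and simplify using $\varphi(\varphi-1)=1$ to express the argument of the floor as the target integer plus a correction that is linear in $u_m$. For the $R_{2,2}$ piece the calculation yields
\[
(\varphi-1)(2a(m)+m-2)+2 = f_{1,0}(m) + u_m(3-2\varphi) + (4-2\varphi),
\]
whose correction lies in $(7-4\varphi,\,4-2\varphi)\subset[0,1)$ for every $u_m\in(0,1)$; the $R_{3,1}$ piece produces $f_{2,2}(m)+u_m(5-3\varphi)+(2-\varphi)$, with correction in $(2-\varphi,\,7-4\varphi)\subset[0,1)$. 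In each case the floor therefore equals the stated target, closing the $4$-cycle $R_{2,2}\to R_{1,0}\to R_{2,0}\to R_{3,1}\to R_{2,2}$.

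Once the four identities are in place, the claimed closed forms for $h^2$ and $h^3=h^{-1}$ follow by composing the piecewise definitions on consecutive blocks and simplifying with $\varphi^2=\varphi+1$; each resulting coefficient $5$-tuple can be verified against the one in the statement by the same fractional-part analysis used for $h$ itself. The heart of the work throughout is precisely this tracking of the remainder $u_m$, which is exactly what the referenced Walnut scripts automate.
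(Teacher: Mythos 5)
Your proposal is correct, but it takes a genuinely different route from the paper: the paper gives no hand proof of this theorem at all --- like the other results in Section \ref{permpart}, it is certified by running the accompanying Walnut script (an automaton-based decision procedure over Fibonacci/B\"uchi arithmetic), after the authors remark that a proof \lq\lq using standard identities\rq\rq\ would be long and tedious. You supply exactly such a proof: starting from the partition $\mathbb N=R_{1,0}\cup R_{2,0}\cup R_{2,2}\cup R_{3,1}$ of the example after Theorem \ref{bifparts}, you show $h$ is an index-preserving $4$-cycle of blocks via $h(f_{1,0}(m))=f_{2,0}(m)$, $h(f_{2,0}(m))=f_{3,1}(m)$, $h(f_{3,1}(m))=f_{2,2}(m)$, $h(f_{2,2}(m))=f_{1,0}(m)$; the first two do follow directly from parts (i)--(iii) of Lemma \ref{basicrelwyth}, and I checked your two floor computations: the correction terms $u_m(3-2\varphi)+(4-2\varphi)\in(7-4\varphi,\,4-2\varphi)$ and $u_m(5-3\varphi)+(2-\varphi)\in(2-\varphi,\,7-4\varphi)$ are right and both intervals lie in $(0,1)$, so the floors land on the claimed targets. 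What your route buys is structural insight (it explains why the order is $4$ and generates $h^2$, $h^3$ by composition, with no software dependence); what the paper's route buys is brevity and mechanical certainty. Two small points to finish your write-up: state explicitly that the order is exactly $4$, not a proper divisor (immediate, since $h$ sends $R_{1,0}$ to $R_{2,0}$ and $h^2$ sends $R_{1,0}$ to $R_{3,1}$), and actually carry out the verification that the eight stated $5$-tuples for $h^2$ and $h^3$ reproduce the composed maps on each block --- these are routine and of exactly the type you did for $h$ (two are simply $n+2$ and $n-2$; e.g.\ for $n=f_{1,0}(m)$ one gets $(\varphi+1)b(m)-1=f_{3,1}(m)+(2-\varphi)u_m$, so the floor is $f_{3,1}(m)$), but the theorem as stated includes those closed forms, so they must be checked.
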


Proof: file walnut\textunderscore script\textunderscore theorem\textunderscore 4.3.txt\newline

\begin{theo} The sequence $i=2,3,1,8,11,5,16,4,7,24,6,21,32,10,37,9,29,45,13,50,\dots$ defined by

$(1, 1, -2, 1, 0), (1, 0, 2, 3, 1), (1, -1, 2, 2, 2), (1, -1, 0, 2, 0)$

is a $R_{i,j}$-permutation, and it is of order $6$.

Its powers are $R_{i,j}$-permutations defined by

$i^2=3,1,2,4,6,11,9,8,16,14,5,12,19,24,22,7,17,27,32,30,\dots$

$(1,0,2,2,2),(1,0,-2,1,0),(0,1,0,4,0),(0,1,0,3,1),(-1,2,1,3,2)$.

$i^3=1,2,3,8,5,6,7,4,9,10,11,21,13,14,15,16,29,18,19,20,\dots$

$(1,0,2,3,1),(1,-1,0,4,0),(0,1,0,3,2),(0,1,0,2,2),(0,1,0,1,0)$.

$i^4=2,3,1,4,11,5,16,8,7,24,6,12,32,10,37,9,17,45,13,50,\dots$

$(1,1,-2,1,0),(1,-1,2,2,2),(1,-1,0,3,2),(0,1,0,4,0),(0,1,0,3,1)$.

$i^5=i^{-1}=3,1,2,8,6,11,9,4,16,14,5,21,19,24,22,7,29,27,32,30,\dots$

$(1,0,2,1,1),(1,0,-2,1,0),(1,-1,0,4,0),(-1,2,1,3,2)$.
\end{theo}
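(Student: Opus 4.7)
The plan is to proceed in four stages. First, confirm that the four sets $R_{1,0}, R_{2,0}, R_{2,2}, R_{3,1}$ really do partition $\mathbb{N}$, so that the piecewise rule unambiguously defines a function $\iota\colon\mathbb{N}\to\mathbb{N}$: this is precisely the partition displayed in the example after Theorem \ref{bifparts}. Second, verify that $\iota$ is a bijection of $\mathbb{N}$. Third, establish the claimed piecewise formulas for $\iota^2,\iota^3,\iota^4,\iota^5$, noting that they are presented over finer partitions (five blocks each) obtained from $\mathfrak P_4$ by additional bifurcations in the sense of Theorem \ref{bifparts}. Fourth, conclude $\iota^6=\mathrm{id}$, which then follows from $\iota\circ\iota^5=\mathrm{id}$.

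The core computational task is, for every block of the domain partition of each $\iota^k$, to identify the block in which the image lies, so that the next application of $\iota$ selects the correct affine rule. This reduces to floor-function identities of the shape
\[
\lfloor (a\varphi+b)\lfloor (a'\varphi+b')n+c'\rfloor+c\rfloor=\lfloor (a''\varphi+b'')n+c''\rfloor
\]
holding on a prescribed block. Using $\varphi^2=\varphi+1$, $\varphi^{-1}=\varphi-1$, the representations $f_{1,0}(n)=b(n)$ and $f_{2,0}(n)=b(a(n))$, the composition relations of Lemma \ref{basicrelwyth}, and the increment relations of Proposition \ref{indpartfkind}, each such identity can be reduced to a polynomial identity in Fibonacci numbers together with a bounded-remainder floor manipulation of the type carried out in Lemma \ref{newvarphim1}. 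At that level the verification is mechanical: for each of the four starting blocks one substitutes the defining formula of $\iota$, computes the affine image, uses the Wythoff partition to decide the resulting block, and iterates.

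The main obstacle is bookkeeping rather than any single sharp estimate: after two compositions the number of cases proliferates (a $4\times 5$ table already for $\iota^2$, with further refinements for the higher powers), and one must keep careful track of which sub-block of $\mathfrak P_4$ the argument belongs to. For this reason I would, following the authors' own convention in this section, delegate the case analysis to the automatic prover Walnut \cite{walnut}, encoding each identity as a first-order statement in the extension of Presburger arithmetic for which the predicate \lq$n\in R_{i,j}$\rq\ is recognizable via the automaton that computes the lower Wythoff sequence; all claims then reduce to finite-automaton equivalence checks. As an independent sanity check I would evaluate the first twenty terms of each power directly from the piecewise formula and compare with the explicit lists in the statement, and finally verify $\iota\circ\iota^5=\mathrm{id}$ block by block on the refinement used for $\iota^5$, which yields $\iota^6=\mathrm{id}$ and hence the claimed order exactly $6$ (orders $2$ and $3$ being excluded by direct inspection of the listed initial values).
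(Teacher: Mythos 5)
Your proposal is correct and follows essentially the same route as the paper: the authors likewise note that a by-hand verification via the identities of Lemma \ref{basicrelwyth} and Proposition \ref{indpartfkind} would be long and tedious, and they delegate the block-by-block verification of the piecewise formulas and of the order to the Walnut prover (script walnut\textunderscore script\textunderscore theorem\textunderscore 4.4.txt), exactly as you propose. Your additional structuring (checking the partition $R_{1,0},R_{2,0},R_{2,2},R_{3,1}$, then bijectivity, then the powers, then $\iota\circ\iota^{5}=\mathrm{id}$) is a sound organization of the same verification.
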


Proof: file walnut\textunderscore script\textunderscore theorem\textunderscore 4.4.txt\newline

Now we will give an element of infinite order.

\begin{theo} The sequence $j=1,2,5,3,7,4,10,13,6,15,18,8,20,9,23,26,11,28,12,31,\dots$ defined by

$(1,0,1,2,0),(1,0,-1,1,0),(1,-1,1,1,1)$

is a $R_{i,j}$-permutation.

Its inverse $j^{-1}=1,2,4,6,3,9,5,12,14,7,17,19,8,22,10,25,27,11,30,13,\dots$ is a $R_{i,j}$-permutation defined by

$(1,0,0,2,0),(1,0,0,1,1),(1,-1,1,2,1),(1,-1,0,3,0)$.

\end{theo}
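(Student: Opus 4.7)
The plan is to determine $j$'s action on each of the three pieces of its domain partition, deduce from this that $j$ is a bijection of $\mathbb{N}$, and then invert each restriction to recover the announced formulas for $j^{-1}$.

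\textbf{Well-definedness.} Corollary \ref{cortripart} gives that $R_{1,0}, R_{1,1}, R_{2,0}$ is a partition of $\mathbb{N}$, so $j$ is a well-defined function $\mathbb{N}\to\mathbb{Z}$. Applying Theorem \ref{bifparts} with $i=1, j=0$ refines $R_{1,0}$ as $R_{2,1}\cup R_{3,0}$, so $R_{2,0}, R_{1,1}, R_{2,1}, R_{3,0}$ is also a partition of $\mathbb{N}$, which makes the candidate $j^{-1}$ well-defined.

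\textbf{Action of $j$ on each piece.} I would use the parameterizations $f_{2,0}(m)=a(b(m))$ (identity \eqref{theoid2} with $i=0, j=0$), $f_{1,0}(m)=b(m)$, and $f_{1,1}(m)=b(m)-1$ (the latter two from Lemma \ref{basicrelwyth}(iii)), combined with parts (i)--(iii) of Lemma \ref{basicrelwyth}, to compute
$$j(f_{2,0}(m))=a(a(b(m)))+1=a(b(m))+b(m)=b(b(m))=f_{3,0}(m),$$
$$j(f_{1,0}(m))=a(b(m))-1=2a(m)+m-1=b(a(m))=f_{2,1}(m),$$
and, for the third piece, a fractional-parts argument in the style of Lemma \ref{newvarphim1} gives $\lfloor\varphi^{-1}(b(m)-1)\rfloor=a(m)-1$, hence $j(f_{1,1}(m))=a(m)$. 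These three formulas show that $j$ restricts to bijections $R_{2,0}\to R_{3,0}$, $R_{1,0}\to R_{2,1}$, and $R_{1,1}\to R_{0,0}=R_{1,1}\cup R_{2,0}$ (the last one because $a$ enumerates $R_{0,0}$), so $j$ is a bijection of $\mathbb{N}$ onto itself.

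\textbf{Recovering $j^{-1}$.} I would now invert each restriction using Lemma \ref{newvarphim1} and Lemma \ref{basicrelwyth}: for $n=b(b(m))\in R_{3,0}$, equation \eqref{eq:varphimo2} yields $\lfloor\varphi^{-1}n\rfloor=a(b(m))$, matching the tuple $(1,-1,0,3,0)$; for $n=b(a(m))\in R_{2,1}$, Lemma \ref{basicrelwyth}(i) gives $\lfloor\varphi^{-1}n\rfloor=a(a(m))=a(m)+m-1$, so $j^{-1}(n)=b(m)=\lfloor\varphi^{-1}n\rfloor+1$, matching $(1,-1,1,2,1)$; and for $n=a(m)\in R_{1,1}\cup R_{2,0}$, Lemma \ref{basicrelwyth}(i) gives $a(n)=a(a(m))=a(m)+m-1=b(m)-1=f_{1,1}(m)$, which matches the common formula $a(n)$ encoded by both $(1,0,0,1,1)$ and $(1,0,0,2,0)$, the split between the two cases being whether $m\in A$ or $m\in B$.

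\textbf{Main obstacle.} The trickiest ingredient is the identity $\lfloor\varphi^{-1}(b(m)-1)\rfloor=a(m)-1$ used in the $R_{1,1}$ piece of $j$: unlike equations \eqref{eq:varphimo2} and \eqref{eq:varphimo3}, here one has to control the fractional part $\{\varphi^{-1}b(m)\}=b(m)-\varphi a(m)$ and show it lies in $[0,\varphi^{-1})$. This is a short Beatty-style calculation, but as the authors emphasize at the start of Section \ref{permpart}, such piece-by-piece bookkeeping is most cleanly dispatched by the Walnut prover operating on Zeckendorf representations.
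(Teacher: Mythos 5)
Your argument is correct, but it is not the route the paper takes: for every theorem in Section \ref{permpart}, including this one, the paper gives no hand proof at all — it is discharged by the automated prover Walnut (script walnut\textunderscore script\textunderscore theorem\textunderscore 4.5.txt), the authors explicitly saying that the ``standard identities'' proof would be long and tedious. What you have written is essentially that omitted manual proof: parameterize the three domain pieces as $f_{2,0}(m)=a(b(m))$, $f_{1,0}(m)=b(m)$, $f_{1,1}(m)=b(m)-1$, show $j$ carries them bijectively onto $R_{3,0}$, $R_{2,1}$ and $R_{0,0}$, note that these images partition $\mathbb N$ by Theorem \ref{bifparts}, and then read off $j^{-1}$ piecewise; all of your computations check out against the stated tuples. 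Your approach buys structural information the Walnut proof hides (that $j$ maps the partition $\{R_{2,0},R_{1,0},R_{1,1}\}$ onto the refined partition $\{R_{3,0},R_{2,1},R_{0,0}\}$, which is exactly why $j^{-1}$ needs four pieces), at the cost of the floor-function bookkeeping the authors chose to mechanize. Three small repairs: the partition $R_{1,0},R_{1,1},R_{2,0}$ of $\mathbb N$ is Corollary \ref{partt1t2t3}, not Corollary \ref{cortripart} (which lists the companion identities); in the $R_{2,1}$ inversion the equality $\lfloor\varphi^{-1}b(a(m))\rfloor=a(a(m))$ comes from \eqref{eq:varphimo2}, with Lemma \ref{basicrelwyth}(i) only supplying $a(a(m))=a(m)+m-1$; and in your ``main obstacle'' the displayed fractional part is off by a factor of $\varphi$, since $\{\varphi^{-1}b(m)\}=\varphi^{-1}\bigl(b(m)-\varphi a(m)\bigr)=\varphi^{-2}\{\varphi m\}$ — though the bound you need (that it is less than $\varphi^{-1}$) is still true, so the step survives. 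In fact you can bypass that estimate entirely: by Lemma \ref{basicrelwyth}(i), $b(m)-1=a(m)+m-1=a(a(m))$, so on $R_{1,1}$ one computes $j(b(m)-1)=a(a(a(m)))-a(a(m))+1=a(m)$ by a second application of Lemma \ref{basicrelwyth}(i), which keeps the whole proof inside the paper's stated lemmas.
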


Proof: file walnut\textunderscore script\textunderscore theorem\textunderscore 4.5.txt\newline

\begin{lem} If $n\in R_{1,0}$, then $j(n)\in R_{1,0}$ and $j^3(n)=j^2(n)+j(n)-2$.
\end{lem}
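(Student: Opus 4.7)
My plan is to exploit the fact that on $R_{1,0}=B$ the permutation $j$ has the single-formula description $n\mapsto \lfloor \varphi n\rfloor-1=a(n)-1$, so that all three iterates can be reduced to closed-form expressions in $a$ using Lemma \ref{basicrelwyth}.

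First, I will write any $n\in R_{1,0}$ as $n=b(m)$ for a unique $m\in\mathbb N$ (possible since $f_{1,0}=b$ by Lemma \ref{basicrelwyth}(iii)) and compute $j(b(m))=a(b(m))-1$. By part (ii) of Lemma \ref{basicrelwyth} this equals $2a(m)+m-1$, and combining (i) and (iii) yields $b(a(m))=a(a(m))+a(m)=2a(m)+m-1$ as well. Hence $j(b(m))=b(a(m))\in R_{1,0}$, which simultaneously proves the first assertion and exhibits $j|_{R_{1,0}}$ as the shift $b(m)\mapsto b(a(m))$.

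Since $R_{1,0}$ is then $j$-invariant, I can apply the same formula twice more. This gives $j^2(b(m))=b(a(a(m)))=b(a(m)+m-1)$ and $j^3(b(m))=b(a(a(m)+m-1))$. The only nontrivial calculation left is $a(a(m)+m-1)$. The key observation is that $a(m)+m-1$ already equals $a(a(m))$ by Lemma \ref{basicrelwyth}(i), so a further application of (i), now with $n=a(m)$, yields $a(a(m)+m-1)=a(a(m))+a(m)-1=2a(m)+m-2$, and hence $j^3(b(m))=b(2a(m)+m-2)$.

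Finally, expanding both sides using $b(k)=a(k)+k$ and the same $a\circ a$ identity, the right-hand side $j^2(n)+j(n)-2$ becomes $(3a(m)+2m-3)+(2a(m)+m-1)-2=5a(m)+3m-6$, while the left-hand side $j^3(n)=a(2a(m)+m-2)+(2a(m)+m-2)$ simplifies to $(3a(m)+2m-4)+(2a(m)+m-2)=5a(m)+3m-6$, closing the proof. The main technical point is noticing that the value $a(m)+m-1$ appearing inside $j^3$ is itself of the form $a(\cdot)$; this is what lets Lemma \ref{basicrelwyth}(i) do all the work and avoids any direct manipulation of the fractional parts of $\varphi m$.
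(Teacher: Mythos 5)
Your argument is correct, but it follows a genuinely different route from the paper. The paper does not give a hand proof of this lemma at all: as announced at the start of Section \ref{permpart}, all statements about the $R_{i,j}$-permutations (including this one) are verified by the automatic prover Walnut, the proof being the script \lq walnut\_script\_lemma\_4.6.txt\rq; the authors explicitly say that manual proofs via standard identities would be long and tedious. You instead give a short human-readable proof: on $R_{1,0}=B$ the defining tuple $(1,0,-1,1,0)$ makes $j(n)=a(n)-1$, and writing $n=b(m)$ you show $j(b(m))=b(a(m))$ via $a(b(m))=2a(m)+m$ and $b(a(m))=a(a(m))+a(m)=2a(m)+m-1$, which proves invariance of $R_{1,0}$ and conjugates $j|_{R_{1,0}}$ to $m\mapsto a(m)$; iterating and using $a(a(m))=a(m)+m-1$ (twice more, since $a(m)+m-1=a(a(m))$ and $2a(m)+m-2=a(a(a(m)))$ are again values of $a$) both sides of $j^3(n)=j^2(n)+j(n)-2$ reduce to $5a(m)+3m-6$. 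I checked the arithmetic and it is sound; the only point worth making explicit is that your final evaluation of $a(2a(m)+m-2)$ rests on $2a(m)+m-2=a(a(a(m)))$, which follows from your own preceding computation. What each approach buys: yours is self-contained, reveals the structural fact $j(b(m))=b(a(m))$ (which also makes Theorem \ref{g3} transparent, since orbits under $m\mapsto a(m)$ grow Fibonacci-like), and needs only Lemma \ref{basicrelwyth}; the paper's Walnut approach trades this insight for uniformity and scale, handling all the order-$2$, $4$, $6$ and infinite-order permutation claims of the section by the same mechanical decision procedure.
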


Proof: file walnut\textunderscore script\textunderscore lemma\textunderscore 4.6.txt\newline

\begin{theo}\label{g3} If $n\geq 1$, then $j^n(3)=F(n+3)+2$.
\end{theo}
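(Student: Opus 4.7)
The plan is to set $x_n := j^n(3)$ and verify that $x_n = F(n+3) + 2$ by combining a direct computation of the first few iterates with the three-term recurrence supplied by the preceding lemma.

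First I would check the base cases by identifying which block $R_{i,j}$ contains each iterate. Since $3 \in R_{2,0}$, the rule $(1,0,1,2,0)$ gives $x_1 = \lfloor 3\varphi\rfloor + 1 = 5$; since $5 \in R_{1,0}$, the rule $(1,0,-1,1,0)$ gives $x_2 = \lfloor 5\varphi\rfloor - 1 = 7$; and analogously $x_3 = 10$. These coincide with $F(4)+2$, $F(5)+2$, and $F(6)+2$, respectively.

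Next I would invoke the preceding lemma in two steps. Its first clause, applied inductively with the base case $x_1 = 5 \in R_{1,0}$, yields $x_n \in R_{1,0}$ for every $n\geq 1$: if $x_n \in R_{1,0}$ then $x_{n+1} = j(x_n) \in R_{1,0}$ by the lemma. With that in hand, the second clause applied to $m = x_n$ gives
\[
x_{n+3} = x_{n+2} + x_{n+1} - 2 \qquad \text{for all } n \geq 1.
\]

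To close the argument I would show that $z_n := F(n+3)+2$ obeys the same recurrence. Indeed, from $F(n+6) = F(n+5) + F(n+4)$ one immediately has
\[
z_{n+2} + z_{n+1} - 2 = (F(n+5)+2) + (F(n+4)+2) - 2 = F(n+6) + 2 = z_{n+3},
\]
and $z_{n+3} = z_{n+2} + z_{n+1} - 2$ follows at once. Since $x_n$ and $z_n$ satisfy the same third-order linear recurrence and agree at $n=1,2,3$, a routine strong induction forces $x_n = z_n$ for all $n\geq 1$. The only serious work sits in the preceding lemma: once its two assertions are accepted (they are verified by the Walnut script), the proof of the theorem reduces to the bookkeeping above, so the genuine obstacle is really the verification of that lemma rather than anything in the theorem itself.
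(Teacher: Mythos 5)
Your proof is correct and follows the same route the paper intends: the paper's proof is just ``induction on $n$ using the previous lemma,'' and your argument (base cases $j(3)=5$, $j^2(3)=7$, $j^3(3)=10$, closure of $R_{1,0}$ under $j$, then the recurrence $j^{n+3}(3)=j^{n+2}(3)+j^{n+1}(3)-2$ matched against $F(n+3)+2$) is exactly that induction spelled out. The only nitpick is that the relation is a second-order (inhomogeneous) recurrence rather than a ``third-order'' one, but this does not affect the induction.
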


\begin{proof}It can be easily proved by induction on $n$ using the previous lemma.
\end{proof}

Therefore, the permutation $j$ is of order $\infty$ in the permutation group generated by $j$.

\begin{theo}\label{fj} $fj=j^{-1}f$
\end{theo}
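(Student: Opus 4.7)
The plan is to reduce $fj=j^{-1}f$ to the equivalent assertion that $fj$ is an involution. Since $f$ has order $2$ by Theorem~4.1, we have $f=f^{-1}$, so $fj=j^{-1}f$ is equivalent to $fjf=j^{-1}$, and then by right-multiplying by $j$ to $(fj)^2=\mathrm{id}$. Thus it suffices to verify that $(fj)(fj)(n)=n$ for all $n\in\mathbb{N}$. This formulation is attractive because it removes the need to inspect $j^{-1}$ at all: we only need the 5-tuple data for $j$ (on the partition $\{R_{1,0},R_{1,1},R_{2,0}\}$ from Corollary~\ref{partt1t2t3}) and for $f$ (on the Wythoff partition $\{R_{0,0},R_{1,0}\}$).

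The cleanest execution, consistent with the style of this section, is a Walnut verification. I would adapt the scripts already produced for Theorems~4.1 and~4.5: encode the membership predicates for $R_{0,0}$, $R_{1,0}$, $R_{1,1}$, $R_{2,0}$ using the automaton for $\lfloor \varphi n\rfloor$ in \texttt{phin.txt}; express $f$ and $j$ piecewise using their 5-tuples by a case distinction inside the formula; and then ask Walnut to decide the first-order statement $\forall n\,(f(j(n))=j^{-1}(f(n)))$, or equivalently $\forall n\,((fj)^2(n)=n)$. Because each $R_{i,j}$ is Fibonacci-automatic and each piece of $f$ and $j$ is of the form $\lfloor(a\varphi+b)n+c\rfloor$, this falls within the decidable fragment handled by Walnut, and the resulting script should be a short variant of the existing ones in \cite{ehubox}.

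If instead one prefers a hand proof, the workflow is a case analysis over $n\in R_{1,0}\cup R_{1,1}\cup R_{2,0}$: apply $j$ according to its 5-tuple on that piece, then identify the Wythoff block ($R_{0,0}$ or $R_{1,0}$) containing $j(n)$, and apply $f$ accordingly; then repeat for the second copy of $fj$. Each resulting sub-identity is a floor identity in $\varphi$ of the shape $\lfloor(a\varphi+b)\lfloor(a'\varphi+b')n+c'\rfloor+c\rfloor=n$, which, using $\varphi^{-1}=\varphi-1$ and $\varphi^2=\varphi+1$, collapses to combinations of Lemma~\ref{basicrelwyth} and Lemma~\ref{newvarphim1}. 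The main obstacle of the hand approach is precisely this bookkeeping: determining, for each starting piece, which Wythoff block contains $j(n)$ (and then which block contains $f(j(n))$) requires invoking parts~(i)--(iii) of Lemma~\ref{basicrelwyth} to control expressions like $a(a(n)+1)$ or $a(b(n))$, and the number of sub-cases branches accordingly. For that reason the Walnut route is preferable and matches the methodology used for the earlier theorems in this section.
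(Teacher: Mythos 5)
Your proposal is correct and takes essentially the same route as the paper: the paper's proof of this theorem is precisely a Walnut verification (the script \texttt{walnut\textunderscore script\textunderscore theorem\textunderscore 4.8.txt} in \cite{ehubox}), which is your primary plan. Your preliminary reduction of $fj=j^{-1}f$ to $(fj)^2=\mathrm{id}$ via $f^2=\mathrm{id}$ is a valid, harmless simplification; the paper just has Walnut decide the identity directly.
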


Proof: file walnut\textunderscore script\textunderscore theorem\textunderscore 4.8.txt\newline

By Theorems \ref{g3} and \ref{fj}, the permutation group generated by $f$ and $j$ is an infinite dihedric group.
                                                                    
\section{An application of Fibonacci-like partitions}\label{m2}

If $A\subseteq\mathbb Z^{\geq 0}$ we will denote, as usual in the literature, by $\text{mex}(A)$ (minimum excluded value) the number $\min (\mathbb Z^{\geq 0}-A)$, that is, the minimum non-negative integer that is not in $A$.

In this section and in the following one, we will consider the following greedy algorithm:\newline\newline

\begin{algorithm}[H]
\caption{}\label{alg}
\begin{algorithmic}[1]
\REQUIRE $n,m\in\mathbb N$
\ENSURE $Q^{[m,n]}\in M_{(m+1)\times n}(\mathbb Z^{\geq 0})$
\FOR{$i\in\{0,\dots,m\}$}
\FOR{$j\in\{0,\dots,n-1\}$}
\STATE $S=\{q^{[m,n]}_{i,k}+q^{[m,n]}_{l,j}-q^{[m,n]}_{l,k}\mid 0\leq l<i,0\leq k<j\}$
\STATE $q^{[m,n]}_{i,j}=\text{mex}(S\cap\mathbb Z^{\geq 0})$
\ENDFOR
\ENDFOR
\RETURN $Q^{[m,n]}$
\end{algorithmic}
\end{algorithm}

Let us see an example for $m=3,n=4$. Obviously, $S=\emptyset$ when $i=0$, and also when $j=0$. We will show next $S$ in the other cases:

$$\begin{tabular}{c|ccc}
i/j & 1 & 2 & 3 \\
\hline
1 & \{0\} & \{0,1\} & \{0,1,2\} \\
2 & \{0,1\} & \{0,2,3\} & \{0,1,2,3,4\} \\
3 & \{0,1,2\} & \{0,1,2,3,4\} & \{0,3,5,6,9\}   
\end{tabular}
$$

Hence, the matrix is $$\left(\begin{array}{cccc}
0 & 0 & 0 & 0\\
0 & 1 & 2 & 3\\
0 & 2 & 1 & 5\\
0 & 3 & 5 & 1
\end{array}\right).$$

We observe that the matrices $Q^{[m,n]}$ obtained with Algorithm \ref{alg} are monotone with respect to the inclusion, in the sense that if $m\leq m^{\prime},n\leq n^{\prime}$, then $Q^{[m,n]}$ is an initial submatrix of $Q^{[m^{\prime},n^{\prime}]}$, that is, $Q^{[m,n]}_{i,j}=Q^{[m^{\prime},n^{\prime}]}_{i,j}$ $\forall i\in\{0,\dots,m\}\forall j\in\{0,\dots,n-1\}$. This let us to obtain an infinite matrix $Q$ with an infinite number of rows and columns and obtain, for each $m\in\mathbb Z^{\geq 0}$, an infinite matrix $Q^{[m]}$ with $m+1$ rows and an infinite number of colums.

We will study in this section the infinite matrix $Q^{[2]}$. Its first row is, obviously, the all-$0$ sequence, because the corresponding sets $S$ in the algorithm are $\emptyset$. Its second row has $j$ in its $j$-th position, because it is clear that $S=\{0,1,\dots,j-1\}$.

We will determine in what follows its third row. If $j\in\mathbb Z^{\geq 0}$, we will put $S_j$ to be the set of avoided values obtained with Algorithm 1 for $i=2$ and the corresponding value of $j$. We will put also $q_j=q^{[2]}_{2,j}$, $U_j=\{q_k\mid 0\leq k<j\}$,$W_j=\{q_k+j-k\mid 0\leq k<j\}$. If $j\in\mathbb N$, let $n(j)=\max\{k\mid \{0,1,\dots,k\}\subseteq U_j\}$. By the definition of $n(j)$, for every $j\in\mathbb N$, $$U_j=\{0,1,\dots,n(j),z_{j,1},\dots,z_{j,j-n(j)-1}\},$$ where $z_{j,j-n(j)-1}>\dots >z_{j,1}\geq n(j)+2$. If $j\in\mathbb N$, we will put 
$m(j)=\lfloor\varphi j\rfloor-j+1$.

We will define also, for $n\in\mathbb N$, $f(n)=b(n),g(n)=b(n)-1$ and $h(n)=2a(n)+n$, and we define the following subsets of $\mathbb N$:
$$T_1=\{f(n)\mid n\in\mathbb N\}=\{2,5,7,10,13,15,18,20,23,26,\dots\},$$

$$T_2=\{g(n)\mid n\in\mathbb N\}=T_1-1=\{1,4,6,9,12,14,17,19,22,25,\dots\},$$

$$T_3=\{h(n)\mid n\in\mathbb N\}=\{3,8,11,16,21,24,29,32,37,42,\dots\}.$$

Observe that the mappings $f,g,h$ are just $f_{1,0},f_{1,1},f_{2,0}$, and $T_1,T_2,T_3$ are the corresponding sets $R_{1,0},R_{1,1},R_{2,0}$ appearing in Corollary \ref{partt1t2t3}, which form a partition of the set of natural numbers.

Given a set $X=\{x_1,\dots,x_p\}$ of nonnegative integer numbers, and given $y\in\mathbb Z^{\geq 0}$, $X+y$ will denote the set $\{x_1+y,\dots,x_p+y\}$.

As usual, $\lfloor x\rfloor$ will denote the integer part of the number $x$.

\begin{theo}\label{tqj} The sequence $\{q_{j}\}_{j\in \mathbb Z^{\geq 0}}$ is a permutation of the set $\mathbb Z^{\geq 0}$ and every integer number appears exactly once in the sequence $\{q_{j}-j\}_{j\in\mathbb Z^{\geq 0}}$.
\end{theo}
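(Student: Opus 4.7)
My plan is to identify $q_j$ explicitly as a piecewise function on the three-class partition $\mathbb{N}=T_1\cup T_2\cup T_3$ of Corollary~\ref{partt1t2t3}, verify the formula by strong induction on $j$, and then read off both permutation assertions from it. Inspecting the initial segment $0,2,1,5,7,3,10,4,13,15,\ldots$ (and comparing with the sequence $f$ introduced in Section~\ref{permpart}, which the paper already advertises as playing a role here) suggests
\[
q_0=0,\quad q_{b(n)}=a(n),\quad q_{b(n)-1}=b(a(n)),\quad q_{2a(n)+n}=b(b(n))\qquad (n\geq 1),
\]
the three nonzero cases corresponding to $j\in T_1$, $j\in T_2$, $j\in T_3$ respectively.

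Granting the formula, both statements are short. The three ranges $\{a(n)\}$, $\{b(a(n))\}$, $\{b(b(n))\}$ lie in $A$, $b(A)$, $b(B)$ respectively and are therefore pairwise disjoint; since $a$ and $b$ are injective and $A\cup B=\mathbb{N}$, their union is $A\cup(b(A)\cup b(B))=A\cup B=\mathbb{N}$, so together with $\{0\}$ one obtains every element of $\mathbb{Z}^{\geq 0}$ exactly once. For the difference statement, a direct computation using parts (i)--(iii) of Lemma~\ref{basicrelwyth} yields
\[
q_{b(n)}-b(n)=-n,\quad q_{b(n)-1}-(b(n)-1)=a(n),\quad q_{2a(n)+n}-(2a(n)+n)=b(n),
\]
so $\{q_j-j:j\geq 0\}=\{0\}\cup\{-n:n\geq 1\}\cup A\cup B=\mathbb{Z}$, with each integer arising exactly once.

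The substantive work is the inductive step. Writing $q_j^\star$ for the value prescribed by the formula, I need (a) $q_j^\star\notin U_j\cup W_j$ and (b) every non-negative integer $r<q_j^\star$ lies in $U_j\cup W_j$. Part (a) is a distinctness statement about the three ranges and follows from injectivity of $a$, $b$, $b\circ a$, $b\circ b$ together with their pairwise disjoint images. Part (b), the minimality check, is the delicate one: for each $r<q_j^\star$ one must exhibit some $k<j$ with either $q_k=r$ (so $r\in U_j$) or $q_k=r-(j-k)$ (so $r\in W_j$). Splitting by the class of $j$ and by the location of $r$ within $\{0\}\cup A\cup B$ yields a nine-case analysis, each case reducing to an identity among $a$ and $b$ of precisely the type collected in Lemma~\ref{basicrelwyth} and Corollary~\ref{cortripart}; the auxiliary quantities $n(j)$, $z_{j,k}$, and $m(j)=\lfloor\varphi j\rfloor-j+1$ introduced just before the statement are exactly the bookkeeping tools that describe $U_j$ as an initial segment of $\mathbb{Z}^{\geq 0}$ together with some outlying spikes, and that pin down $q_j^\star$ itself in the $T_1$ case.

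I expect (b) to be the main obstacle. Each individual Beatty-type identity is routine once isolated, but organising the casework so that the three classes of $j$ share machinery, and so that the shifting description of $U_j$ (initial segment plus spikes) stays consistent across the induction, requires careful accounting. Once that accounting is in place, Corollary~\ref{cortripart} furnishes precisely the four "transition" identities relating $f_{1,0}$, $f_{1,1}$ and $f_{2,0}$ that close each subcase, and the induction terminates cleanly.
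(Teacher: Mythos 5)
Your closed formula is correct and is exactly the paper's: since $b(n)-1=a(a(n))$ and $2a(n)+n=a(b(n))$, your three cases $q_{b(n)}=a(n)$, $q_{b(n)-1}=b(a(n))$, $q_{2a(n)+n}=b(b(n))$ coincide with the paper's Lemma \ref{mainlemmaq2}(vii), namely $q_j=\lfloor\varphi j\rfloor+1$ for $j\in A$ and $q_j=\lfloor(\varphi-1)j\rfloor$ for $j\in B$. Your endgame is also sound and essentially the paper's: surjectivity because $A\cup b(A)\cup b(B)\cup\{0\}=\mathbb Z^{\geq 0}$ with disjointness from injectivity of $a,b$, and the difference statement from $q_{b(n)}-b(n)=-n$, $q_{b(n)-1}-(b(n)-1)=a(n)$, $q_{2a(n)+n}-(2a(n)+n)=b(n)$, which is what the paper gets from its identities (\ref{comp1}) and (\ref{comp2}).

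The genuine gap is that the inductive verification of the formula --- which you yourself identify as ``the substantive work'' --- is only described, not carried out, and it is where all the content of the theorem lives. What must actually be proved at each step is the structural description of the avoided set: that $W_j$ is the \emph{interval} $\{m(j),\dots,\lfloor\varphi j\rfloor\}$, that the elements of $U_j$ above its initial segment $\{0,\dots,n(j)\}$ are precisely \emph{consecutive} upper Wythoff numbers $b(k_j),b(k_j+1),\dots$, and that $n(j)$ equals $m(j)-2$, $m(j)-1$, $m(j)$ according as $j\in T_1,T_2,T_3$; only with all three facts does the mex evaluate to your $q_j^\star$. Your claim that each subcase ``reduces to an identity of precisely the type collected in Lemma \ref{basicrelwyth} and Corollary \ref{cortripart}'' underestimates this: the paper needs, in addition, the transition facts of Lemma \ref{ajajmu} (how $m(j)$ and $\lfloor\varphi j\rfloor$ move when $j-1\in A$ versus $j-1\in B$) and the six identities of Lemma \ref{ahb} (e.g.\ $m(f(a(k)))=b(k)$), which are exactly what shows that the newly appended value $q_{j-1}$, together with the upward shift of $W_j$, absorbs the smallest spike $z_{j-1,1}$ into the initial segment and keeps the remaining spikes consecutive $b$-values. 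Until that bookkeeping (the paper's Lemma \ref{mainlemmaq2}(i)--(v), proved via Lemmas \ref{varphim1}, \ref{ajajmu}, \ref{ahb}) is actually executed, your argument is a correct plan along the paper's own route rather than a proof.
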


With the terminology that we will give in Section \ref{ioas}, the previous theorem states that $Q^{[2]}$ is an infinite cyclic difference matrix.

\begin{lem}\label{varphim1}$$\lfloor (\varphi-1)\lfloor \varphi^2 k\rfloor\rfloor-\lfloor\varphi^2 k\rfloor=-k\ \forall k\in\mathbb N$$
\end{lem}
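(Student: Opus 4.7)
The plan is to observe that this lemma is essentially an immediate corollary of two facts already established: equation \eqref{eq:varphimo2} in Lemma \ref{newvarphim1}, and part (iii) of Lemma \ref{basicrelwyth}. The key algebraic identity to exploit is $\varphi-1=\varphi^{-1}$, which rewrites the outer expression on the left-hand side in the form that \eqref{eq:varphimo2} handles directly.

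First I would use $\varphi-1=\varphi^{-1}$ to rewrite
\[
\lfloor(\varphi-1)\lfloor\varphi^2 k\rfloor\rfloor=\lfloor\varphi^{-1}\lfloor\varphi^2 k\rfloor\rfloor.
\]
Then, applying equation \eqref{eq:varphimo2} with $n=k$, this equals $\lfloor\varphi k\rfloor=a(k)$. Since $\lfloor\varphi^2 k\rfloor=b(k)$, the quantity we want to evaluate becomes $a(k)-b(k)$.

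Next I would invoke part (iii) of Lemma \ref{basicrelwyth}, which says $b(k)=a(k)+k$, so that $a(k)-b(k)=-k$. Chaining these equalities gives the desired identity. There is no real obstacle here; the statement is a one-line consequence of the two named results, and the only thing worth being careful about is making the $\varphi-1=\varphi^{-1}$ substitution explicit so that \eqref{eq:varphimo2} can be applied verbatim.
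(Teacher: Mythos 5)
Your proof is correct, and it takes a slightly different (and shorter) route than the paper. The paper re-derives the identity directly from Lemma \ref{basicrelwyth}: it writes $\lfloor(\varphi-1)\lfloor\varphi^2 k\rfloor\rfloor-\lfloor\varphi^2 k\rfloor=\lfloor\varphi\, b(k)\rfloor-2b(k)=a(b(k))-2b(k)$, then applies part (ii) to get $2a(k)+k-2b(k)$ and part (iii) to conclude $-k$. You instead substitute $\varphi-1=\varphi^{-1}$ and quote equation \eqref{eq:varphimo2} of Lemma \ref{newvarphim1} to collapse the outer floor to $a(k)$ at once, finishing with part (iii). This is legitimate and non-circular: Lemma \ref{newvarphim1} appears earlier in the paper and is proved only from Lemma \ref{basicrelwyth}, so nothing downstream of the present lemma is used. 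In substance the two arguments are the same computation — \eqref{eq:varphimo2} is itself proved via parts (ii) and (iii) — so what your version buys is economy and the observation that Lemma \ref{varphim1} is essentially a restatement of \eqref{eq:varphimo2} combined with $b(k)=a(k)+k$, while the paper's version is self-contained at the cost of repeating that computation.
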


\begin{proof} It holds that $\lfloor (\varphi-1)\lfloor\varphi^2 k\rfloor\rfloor-\lfloor\varphi^2(k)\rfloor=\lfloor\varphi\lfloor\varphi^2 k\rfloor\rfloor-2\lfloor\varphi^2 k\rfloor=a(b(k))-2 b(k)$.

By part (ii) of Lemma \ref{basicrelwyth}, this is $2a(k)+k-2b(k)$.

By part (iii) of the same lemma, this is $-k$, thus concluding the proof.

\end{proof}

\begin{lem}\label{ajajmu}
\begin{enumerate}
\item If $j-1\in A$, then $m(j)=m(j-1)+1$ and $\lfloor\varphi j\rfloor=\lfloor\varphi (j-1)\rfloor+2$.
\item If $j-1\in B$, then $m(j)=m(j-1)=\lfloor (\varphi-1)(j-1)\rfloor+1$ and $\lfloor\varphi j\rfloor=\lfloor\varphi (j-1)\rfloor+1$.
\end{enumerate}
\end{lem}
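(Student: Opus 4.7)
The plan is to translate each hypothesis into an explicit form for $j-1$ in terms of $a(k)$ or $b(k)$, then compute $a(j-1)$ and $a(j)$ separately using the identities already established in Lemmas \ref{basicrelwyth} and \ref{newvarphim1}, and finally read off the values of $m(j)$ from the definition $m(j)=a(j)-j+1$.

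For part (i), write $j-1=a(k)$ for some $k\in\mathbb N$. Then Lemma \ref{basicrelwyth}(i) gives $a(j-1)=a(a(k))=a(k)+k-1$. For $a(j)=a(a(k)+1)$, I would invoke the identity $a(a(n)+1)=a(n)+1+n$ already derived inside the proof of Proposition \ref{indpartfkind}(v). Subtracting yields $a(j)-a(j-1)=2$, i.e., $\lfloor\varphi j\rfloor=\lfloor\varphi(j-1)\rfloor+2$; and then $m(j)-m(j-1)=(a(j)-j+1)-(a(j-1)-(j-1)+1)=2-1=1$.

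For part (ii), write $j-1=b(k)$. Lemma \ref{basicrelwyth}(ii) directly gives $a(j-1)=a(b(k))=2a(k)+k$. For $a(j)=a(b(k)+1)$, I would use $\varphi^{-1}=\varphi-1$ together with Lemma \ref{newvarphim1}: equation \eqref{eq:varphimo3} says $\lfloor(\varphi-1)(b(k)+1)\rfloor=a(k)$, so that
\[
a(b(k)+1)=\lfloor\varphi(b(k)+1)\rfloor=(b(k)+1)+\lfloor(\varphi-1)(b(k)+1)\rfloor=b(k)+1+a(k).
\]
Subtracting, $a(j)-a(j-1)=(b(k)+1+a(k))-(2a(k)+k)=1$ using $b(k)=a(k)+k$ from Lemma \ref{basicrelwyth}(iii), which proves $\lfloor\varphi j\rfloor=\lfloor\varphi(j-1)\rfloor+1$ and hence $m(j)=m(j-1)$. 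For the remaining claim $m(j)=\lfloor(\varphi-1)(j-1)\rfloor+1$, I would use equation \eqref{eq:varphimo2} from Lemma \ref{newvarphim1}, which gives $\lfloor(\varphi-1)b(k)\rfloor=a(k)$, and then compute directly $m(j)=a(j)-j+1=(b(k)+1+a(k))-(b(k)+1)+1=a(k)+1=\lfloor(\varphi-1)(j-1)\rfloor+1$.

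No step is really a serious obstacle; the only subtlety is recognizing that the computation of $a(b(k)+1)$ (rather than $a(b(k))$) is exactly what the second, slightly trickier formula \eqref{eq:varphimo3} of Lemma \ref{newvarphim1} was designed for, and that the needed identity $a(a(n)+1)=a(n)+n+1$ has already appeared inside the proof of Proposition \ref{indpartfkind}(v) so can be quoted rather than reproved.
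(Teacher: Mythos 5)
Your proof is correct and follows essentially the same route as the paper's: both reduce the claims to the Wythoff identities $a(a(k)+1)=a(k)+k+1$ and $a(b(k)+1)=a(b(k))+1$, which the paper cites through parts (iv)--(v) of Proposition \ref{indpartfkind} (with $i=0$) and you obtain by quoting the identity inside the proof of (v) and by applying Lemma \ref{newvarphim1} directly. The only cosmetic difference is that the paper gets $m(j-1)=\lfloor(\varphi-1)(j-1)\rfloor+1$ from the trivial observation that $\lfloor\varphi x\rfloor-x=\lfloor(\varphi-1)x\rfloor$ for integer $x$, whereas you route it through \eqref{eq:varphimo2}; both are valid.
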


\begin{proof} If $j-1\in A$, then $j-1=a(k)$ for some $k\in\mathbb N$, and $m(j)=a(a(k)+1)-a(k), m(j-1)=a(a(k))-a(k)+1$. We have to prove that $a(a(k)+1)=a(a(k))+2$ which, by part (i) of Lemma \ref{basicrelwyth}, is equivalent to $a(a(k)+1)=a(k)+k+1$. By part (v) of Proposition \ref{indpartfkind} with $i=0$, we have $b(k)+1=a(a(k)+1)$ and now, by part (iii) of Lemma \ref{basicrelwyth}, $b(k)+1=a(k)+k+1$, which finishes the proof that $m(j)=m(j-1)+1$. Now this implies that $\lfloor\varphi j\rfloor=\lfloor\varphi (j-1)\rfloor+2$.

Similarly, if $j-1\in B$, then $j-1=b(k)$ for some $k\in\mathbb N$, and we have to prove that $a(b(k)+1)=a(b(k))+1$ and this, by part (ii) of Lemma \ref{basicrelwyth}, is equivalent to $a(b(k)+1)=2a(k)+k+1$. By part (iv) of Proposition \ref{indpartfkind} with $i=0$, we get $a(b(k)+1)=a(b(k))+1$ and, by part (ii) of Lemma \ref{basicrelwyth}, $a(b(k))=2a(k)+k$, and this proves that $m(j)=m(j-1)$.

Now, obviously, $m(j-1)=\lfloor\varphi (j-1)\rfloor-j+1+1=\lfloor (\varphi-1)(j-1)\rfloor+1$, and $\lfloor\varphi j\rfloor=\lfloor\varphi (j-1)\rfloor+1$ is a direct consequence of $m(j)=m(j-1)$.

\end{proof}

\begin{lem}\label{ahb}\hfill
\begin{enumerate}
\item $m(f(a(k)))=b(k)\ \forall k\in\mathbb N$.
\item $m(f(b(k)))+1=b(a(k)+1)\ \forall k\in\mathbb N$.
\item $m(g(a(k)))+1=b(k)\ \forall k\in\mathbb N$.
\item $m(g(b(k)))+2=b(a(k)+1)\ \forall k\in\mathbb N$.
\item $m(h(a(k)))+3=b(a(k)+1)\ \forall k\in\mathbb N$.
\item $m(h(b(k)))+2=b(b(k)+1)\ \forall k\in\mathbb N$.
\end{enumerate}
\end{lem}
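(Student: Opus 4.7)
The plan is to reduce all six identities to three master formulas and a gap dichotomy, and then read each case off by direct substitution. Recalling that $m(j)=a(j)-j+1$, direct computation from parts (i)--(iii) of Lemma \ref{basicrelwyth} yields the two basic identities
\[
m(a(r)) = a(a(r)) - a(r) + 1 = r, \qquad m(b(n)) = a(b(n)) - b(n) + 1 = a(n)+1,
\]
valid for all $r,n\in\N$. Since Lemma \ref{basicrelwyth}(ii) identifies $h(n) = 2a(n)+n$ with $a(b(n))$, the first formula applied with $r = b(n)$ gives the third master identity $m(h(n)) = b(n)$, which by itself will dispose of parts (v) and (vi) once the gap behaviour of $b$ is known.

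To handle the $g$ cases I would first show that $b(n)-1\in A$ for every $n\in\N$: indeed, parts (i) and (iii) of Lemma \ref{basicrelwyth} give $a(a(n)) = a(n)+n-1 = b(n)-1$, so $b(n)-1$ lies in the range of $a$. Feeding $j=b(n)$ into Lemma \ref{ajajmu}(i) then gives $m(b(n)-1) = m(b(n)) - 1 = a(n)$, and hence $m(g(n)) = a(n)$. The last preparatory ingredient is the $b$-gap dichotomy: combining Lemma \ref{ajajmu} with $b(n)=a(n)+n$, one sees that $b(n+1) - b(n) = 3$ when $n\in A$ and $b(n+1) - b(n) = 2$ when $n\in B$.

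With these tools each of the six parts becomes a one-line substitution. For (i), $m(f(a(k))) = m(b(a(k))) = a(a(k))+1 = a(k)+k = b(k)$; for (iii), $m(g(a(k))) = a(a(k)) = b(k)-1$. For (ii) and (iv), I would use the byproduct $a(b(k)) - b(a(k)) = 1$ of Lemma \ref{basicrelwyth} (parts (i)--(iii)) to write $m(f(b(k))) = a(b(k))+1 = b(a(k))+2$ and $m(g(b(k))) = a(b(k)) = b(a(k))+1$; both identities then collapse to the single fact $b(a(k)+1) = b(a(k))+3$, which is the gap dichotomy at the $A$-point $a(k)$. Finally, for (v) and (vi), the master formula $m\circ h=b$ gives $m(h(a(k))) = b(a(k))$ and $m(h(b(k))) = b(b(k))$, and the required increments $+3$ and $+2$ are precisely the $b$-gaps at $a(k)\in A$ and $b(k)\in B$.

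The main obstacle I foresee is organisational rather than mathematical: the statement couples three distinct sequences $f,g,h$ with two evaluation points $a(k)$ and $b(k)$ each, and the routine substitutions are easy to mis-bookkeep. Extracting the three master identities $m(a(r))=r$, $m(b(n))=a(n)+1$, and $m(h(n))=b(n)$, together with the observation $b(n)-1\in A$ and the $b$-gap dichotomy, before touching any individual case compresses the whole proof into a short block of substitutions and requires no Wythoff identity beyond what Lemmas \ref{basicrelwyth} and \ref{ajajmu} already supply.
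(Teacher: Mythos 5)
Your proposal is correct: every master identity checks out against the paper's toolkit ($m(a(r))=r$ and $m(b(n))=a(n)+1$ follow from parts (i)--(iii) of Lemma \ref{basicrelwyth}; $m(h(n))=b(n)$ from $h(n)=a(b(n))$; $m(g(n))=a(n)$ from $b(n)-1=a(a(n))\in A$ together with Lemma \ref{ajajmu}(i); and the gap dichotomy $b(n+1)-b(n)=3$ for $n\in A$, $=2$ for $n\in B$ follows from Lemma \ref{ajajmu} plus $b=a+\mathrm{id}$), and the six substitutions you list do reproduce the stated identities, e.g.\ $m(f(b(k)))+1=a(b(k))+2=b(a(k))+3=b(a(k)+1)$. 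The paper argues in the same elementary spirit but organizes it differently: it writes $m(j)=\lfloor(\varphi-1)j\rfloor+1$, proves only part (i) by invoking Lemma \ref{varphim1} (which is just the combination of parts (ii) and (iii) of Lemma \ref{basicrelwyth} that you use directly) and then the basic Wythoff relations, and dismisses parts (ii)--(vi) as ``similar''. Your reorganization through the four closed forms $m\circ a=\mathrm{id}$, $m\circ b=a+1$, $m\circ g=a$, $m\circ h=b$ and the $b$-gap dichotomy buys a uniform, fully written-out treatment of all six cases with no per-case trigonometry of floors, at the modest cost of importing Lemma \ref{ajajmu} (which the paper's proof of part (i) does not need, though its omitted cases involving $b(a(k)+1)$ and $b(b(k)+1)$ would require it or an equivalent such as Proposition \ref{indpartfkind}); both routes rest on exactly the same underlying identities.
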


\begin{proof} With respect to part (i), $m(f(a(k)))=m(b(a(k)))=\lfloor (\varphi-1)b(a(k))\rfloor+1$. By Lemma \ref{varphim1}, this is $-a(k)+b(a(k))+1$. By part (iii) of Lemma \ref{basicrelwyth}, this is $a(a(k))+1$, and by part (i) of the same lemma, this is $a(k)+k$. Finally, by part (iii) again, this is $b(k)$.

The proof of parts (ii) to (vi) is similar to the one of part (i) and will be omitted.
\end{proof}

\begin{lem}\label{mainlemmaq2} The following identities hold for every $j\in\mathbb Z^{\geq 0}$:
\begin{enumerate}
\item If $j\geq 1$, then $$W_j=\{m(j),m(j)+1,\dots,\lfloor\varphi j\rfloor\}$$
\item If $j\geq 1$, then $z_{j,i}=b(k_j+i-1)\ \forall i\in\{1,\dots,j-n(j)-1\}$, where $k_j$ is the first natural number such that $b(k_j)>n(j)$.
\item If $j\geq 1$ and $j\in T_1$, then $n(j)=m(j)-2$.
\item If $j\geq 1$ and $j\in T_2$, then $n(j)=m(j)-1$.
\item If $j\geq 1$ and $j\in T_3$, then $n(j)=m(j)$.
\item $$S_j=\begin{cases} \emptyset, & \text{if }j=0\\
\{0,1,\cdots,\lfloor \varphi j\rfloor\}, & \text{if }j\in A\\ \{0,1,\cdots,\lfloor\varphi j\rfloor\}-\{\lfloor (\varphi -1)j\rfloor\}, & \text{ if }j\in B\end{cases}$$
\item $$q_j=\begin{cases} 0, & \text{if }j=0\\
\lfloor \varphi j\rfloor +1, & \text{if }j\in A\\ \lfloor (\varphi -1)j\rfloor, & \text{ if }j\in B\end{cases}$$
\end{enumerate}
\end{lem}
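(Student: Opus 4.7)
The plan is to establish all seven items simultaneously by strong induction on $j$. The base $j=0$ is immediate since $S_0=\emptyset$ and $q_0=0$, while items (i)--(v) only make sense for $j\geq 1$. For the inductive step I assume that items (i)--(vii) hold for every $k<j$ and derive them at $j$.

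First I would prove (i) from the recurrence $W_j=(W_{j-1}+1)\cup\{q_{j-1}+1\}$, which is immediate from the definition of $W_j$. By induction $W_{j-1}=\{m(j-1),\dots,\lfloor\varphi(j-1)\rfloor\}$, while (vii) at $j-1$ supplies $q_{j-1}$; Lemma~\ref{ajajmu} then shows, in the two cases $j-1\in A$ and $j-1\in B$, that the shifted interval together with the added point reassembles into $\{m(j),\dots,\lfloor\varphi j\rfloor\}$. Next, applying (vii) at each $k<j$ shows that $k\mapsto q_k$ acts on $\mathbb{N}$ as the involution swapping $a(n)\leftrightarrow b(n)$ (a direct consequence of Lemma~\ref{basicrelwyth}(i) and equation~\eqref{eq:varphimo2}). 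Writing $N_a=\max\{n:a(n)<j\}$ and $N_b=\max\{n:b(n)<j\}$, one therefore obtains
\[
U_j=\{0\}\cup\{b(n):1\leq n\leq N_a\}\cup\{a(n):1\leq n\leq N_b\}.
\]
Since $N_b\leq N_a$, the smallest positive integer missing from $U_j$ is $a(N_b+1)$, which gives $n(j)=a(N_b+1)-1$ and identifies the excess terms as the ordered list $b(k_j),b(k_j+1),\dots,b(N_a)$, proving (ii).

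Statements (iii)--(v) then amount to computing $m(j)$ in each of the three cases. For $j=b(p)\in T_1$ one has $N_b=p-1$, hence $n(j)=a(p)-1$; the identities of Lemma~\ref{basicrelwyth} yield $m(j)=a(p)+1$, so $n(j)=m(j)-2$. The analogous computations for $j\in T_2$ and $j\in T_3$ use the relations packaged in Lemma~\ref{ahb} together with parts (iv) and (v) of Proposition~\ref{indpartfkind}. The complementary bound $b(N_a)\leq\lfloor\varphi j\rfloor-1$ (for $j=a(m)\in A$, $b(N_a)=b(m-1)$ and $\lfloor\varphi j\rfloor-b(m-1)=a(m)-a(m-1)\in\{1,2\}$; for $j=b(m)\in B$, $N_a=a(m)$ and $b(a(m))=\lfloor\varphi j\rfloor-1$) guarantees that every element of $U_j$ above $n(j)$ already lies inside the interval covered by $W_j$. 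Combining this with (i), (iii)--(v), the union $S_j=U_j\cup W_j$ covers $[0,\lfloor\varphi j\rfloor]$ when $j\in A$ (since $n(j)\geq m(j)-1$ bridges the gap to $W_j=[m(j),\lfloor\varphi j\rfloor]$) and misses exactly the point $m(j)-1=\lfloor(\varphi-1)j\rfloor$ when $j\in B$ (since then $n(j)=m(j)-2$). This yields (vi), and (vii) follows by reading off the mex.

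The principal obstacle is the bookkeeping binding items (vii), (i), and (iii)--(v) into a single induction: in order to describe $U_j$ one needs the closed form (vii) at every smaller index, and the three-way case split $j\in T_1\sqcup T_2\sqcup T_3$ must be maintained consistently, invoking the precise Fibonacci--Wythoff identities of Proposition~\ref{indpartfkind} and Lemma~\ref{ahb} at each step.
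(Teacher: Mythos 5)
Your argument is essentially correct, but its central mechanism differs from the paper's. The paper runs an ordinary induction from $j-1$ to $j$: it keeps items (ii)--(v) as part of the inductive data and tracks how $U_{j-1}$ turns into $U_j$ when the single element $q_{j-1}$ is adjoined, splitting into six cases $j-1\in f(A),f(B),g(A),g(B),h(A),h(B)$ and invoking the corresponding part of Lemma \ref{ahb} (plus Corollary \ref{cortripart}) in each case to see that the new element and the first excess term $z_{j-1,1}$ extend the consecutive block correctly. You instead use strong induction: item (vii) at all indices $k<j$ gives you the closed form $U_j=\{0\}\cup\{b(n):n\leq N_a\}\cup\{a(n):n\leq N_b\}$ (the involution $a(n)\leftrightarrow b(n)$), from which $n(j)=a(N_b+1)-1$, item (ii), and the three values of $n(j)-m(j)$ in (iii)--(v) drop out by direct Wythoff computations, with the extra bound $b(N_a)\leq\lfloor\varphi j\rfloor-1$ ensuring the excess terms stay inside the interval covered by $W_j$. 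This is a genuinely different organization of the same underlying identities: the paper's version localizes the bookkeeping to one new element per step (at the price of six parallel cases, of which only one is written out), while yours buys a uniform global description of $U_j$ at the price of needing (vii) at every smaller index and the explicit bounds on $N_a,N_b$. One step you should make explicit: in the case $j=b(p)\in B$ of item (vi), it is not enough that $\{0,\dots,n(j)\}\cup W_j$ misses $m(j)-1$; you must also check that none of the excess terms $b(k_j),\dots,b(N_a)$ equals $m(j)-1$, which follows in one line because $m(j)-1=a(p)$ is an $a$-number (Lemma \ref{basicrelwyth}) while all excess terms are $b$-numbers. Without that remark the mex computation in (vii), and hence the whole induction, would not close; with it, your proof is sound.
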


\begin{proof}We will prove it by induction on $j$. The results hold trivially for $j=0$ and $j=1$ (having into account that there is no $i$ in (ii) for $j=1$). Let us suppose that it is true for $j-1$, and let $j\geq 2$.

By Algorithm $1$, $$S_j=U_j\cup W_j.$$ 

We have that $$W_j=\{q_0+j,q_1+j-1,\dots,q_{j-1}+1\}=(W_{j-1}+1)\cup\{q_{j-1}+1\},$$ and therefore $$W_j=\begin{cases} \{m(j-1)+1,\dots,\lfloor \varphi (j-1)\rfloor+1\}\cup\{\lfloor\varphi (j-1)\rfloor+2\}, & \text{if }j-1\in A\\ \{m(j-1)+1,\dots,\lfloor \varphi (j-1)\rfloor+1\}\cup\{\lfloor(\varphi-1) (j-1)\rfloor+1\}, & \text{if }j-1\in B\end{cases}$$ Now, (i) follows from Lemma \ref{ajajmu}.

We have $$U_{j-1}=\{0,1,\dots,n(j-1),z_{j-1,1},\dots,z_{j-1,j-n(j-1)-2}\}$$
To prove parts from (ii) to (v) we will distinguish several cases:

Case 1: If $j-1\in f(A)$ then $j-1\in T_1$ and $n(j-1)=m(j-1)-2$. By the definition of $f$ we have that $j-1\in B$, and therefore $q_{j-1}=\lfloor (\varphi-1)(j-1)\rfloor$.

Now, $$U_j=\{0,1,\dots,m(j-1)-2,z_{j-1,1},z_{j-1,2},\dots\}\cup\{\lfloor (\varphi-1)(j-1)\rfloor\}$$ By part (i) of Lemma \ref{ahb}, using the induction hypothesis we obtain that $z_{j-1,1}=n(j-1)+2$. By Lemma \ref{ajajmu} $\lfloor (\varphi-1)(j-1)\rfloor=m(j-1)-1$, and therefore  $$U_j=\{0,1,\dots,m(j),z_{j-1,2},\dots\},$$ which proves (ii). Now, since by Corollary \ref{cortripart}, $j\in T_3$, and $n(j)=m(j)$, we conclude that (v) holds.

Since $j\in A$, parts (vi) and (vii) are a consequence of what we have already proved.

The following remaining $5$ cases can be proved in a similar way to case $1$ using parts (ii) to (vi), respectively, of Lemma \ref{ahb}, and we will omit the proof:

Case 2: $j-1\in f(B)$.

Case 3: $j-1\in g(A)$.

Case 4: $j-1\in g(B)$.

Case 5: $j-1\in h(A)$.

Case 6: $j-1\in h(B)$.
\end{proof}

Now we will prove the theorem:

\begin{proof} Algorithm \ref{alg} guarantees that all the $q_j$ are distinct, and parts (iii),(iv) and (v) of Lemma \ref{mainlemmaq2} prove that every non-negative integer is in some $U_j$ and therefore is in the range of the sequence.

Thus, the $\{q_{j}\}_{j\in \mathbb Z^{\geq 0}}$ form a partition of $\mathbb Z^{\geq 0}$.

The identity \begin{equation}\label{comp1}\lfloor\varphi\lfloor\varphi k\rfloor\rfloor+1-\lfloor\varphi k\rfloor=k\ \forall k\in\mathbb N\end{equation} can be deduced from the case $j=0$ in Corollary 2 in (\cite{griff}) (in fact, the mentioned corollary is stated for $j\in\mathbb N$, but the proof can be adapted to cover also the case $j=0$).

We have by Lemma \ref{varphim1} the identity: \begin{equation}\label{comp2}\lfloor (\varphi-1)\lfloor \varphi^2 k\rfloor\rfloor-\lfloor\varphi^2 k\rfloor=-k\ \forall k\in\mathbb N.\end{equation}

Now, since $q_0-0=0$, the last part of the theorem follows from (\ref{comp1}) and (\ref{comp2}) using part (vii) of Lemma \ref{mainlemmaq2}.
\end{proof}

An alternate way to prove that $q_n$ is a permutation of $\mathbb Z^{\geq 0}$ is to use the following proposition, which prove that $q_n$ is an involution, that is, that $q_{q_n}=n\ \forall n\in \mathbb Z^{\geq 0}$, whose proof is an immediate consequence of Lemma \ref{basicrelwyth} and will be omitted:

\begin{pro}\begin{enumerate}
\item If $j\in A$, then $\lfloor(\varphi-1)(\lfloor\varphi j\rfloor+1)\rfloor=j$.
\item If $j\in B$, then $\lfloor\varphi\lfloor (\varphi-1)j\rfloor\rfloor+1=j$.
\end{enumerate}
\end{pro}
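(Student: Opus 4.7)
The plan is to reduce each identity to the composition rules of Lemma \ref{basicrelwyth} (specifically parts (i) and (iii)) together with Lemma \ref{newvarphim1}, exploiting the fact that $\varphi - 1 = \varphi^{-1}$.

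For part (i), I would begin by writing $j = a(k) = \lfloor \varphi k \rfloor$ for some $k \in \mathbb{N}$, which is available because $j \in A = \mathfrak{R}(\mathfrak{B}_{\varphi})$. Then $\lfloor \varphi j \rfloor = a(a(k))$, which by part (i) of Lemma \ref{basicrelwyth} equals $a(k) + k - 1$. Adding $1$ gives $\lfloor \varphi j \rfloor + 1 = a(k) + k$, and by part (iii) of the same lemma this is $b(k) = \lfloor \varphi^2 k \rfloor$. The outer floor now reads $\lfloor \varphi^{-1} \lfloor \varphi^2 k \rfloor \rfloor$, and equation \eqref{eq:varphimo2} of Lemma \ref{newvarphim1} identifies this with $\lfloor \varphi k \rfloor = a(k) = j$, as required.

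For part (ii), I would write $j = b(k) = \lfloor \varphi^2 k \rfloor$ since $j \in B$, and apply equation \eqref{eq:varphimo2} of Lemma \ref{newvarphim1} directly to conclude $\lfloor (\varphi-1) j \rfloor = \lfloor \varphi^{-1} \lfloor \varphi^2 k \rfloor \rfloor = \lfloor \varphi k \rfloor = a(k)$. Then $\lfloor \varphi a(k) \rfloor = a(a(k)) = a(k) + k - 1$ by part (i) of Lemma \ref{basicrelwyth}, and finally $\lfloor \varphi a(k) \rfloor + 1 = a(k) + k = b(k) = j$ by part (iii).

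The argument involves no real obstacle beyond unpacking the Wythoff representation of $j$ and stringing together three already-proven identities; this is precisely why the authors describe the result as an immediate consequence of Lemma \ref{basicrelwyth}. The only bookkeeping point worth checking is that the chain goes through at the minimal value $k = 1$, where the identity $a(a(k)) = a(k) + k - 1$ reads $1 = 1$, so no degenerate case arises.
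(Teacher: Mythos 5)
Your argument is correct and matches what the paper intends: the authors omit the proof as an immediate consequence of Lemma \ref{basicrelwyth}, and your chain of identities (writing $j=a(k)$ or $j=b(k)$, applying parts (i) and (iii) of that lemma, and invoking equation \eqref{eq:varphimo2} of Lemma \ref{newvarphim1}, which is itself derived from Lemma \ref{basicrelwyth}) is exactly the kind of unpacking they have in mind. The only cosmetic remark is that you could bypass Lemma \ref{newvarphim1} entirely by noting $\lfloor\varphi^{-1}b(k)\rfloor=a(b(k))-b(k)=a(k)$ from parts (ii) and (iii) of Lemma \ref{basicrelwyth}, but this changes nothing of substance.
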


The sequence $q_n$ corresponds with sequence A002251 in Sloane's on-line encyclopedia of integer sequences \cite{sol}, obtained by swapping $a(k)$ and $b(k)$ for all $k\geq 1$. This is evident from the expression for $q_n$ given in Lemma \ref{mainlemmaq2}, and this clarifies the fact, that we already proved, that it is an involution. When we take $q_n+1$ we obtain the sequence studied on page 76 in \cite{berconguy}, which is also studied by  A. Shapovalov (\cite{shap}) and by Venkatachala (\cite{venk}) (sequence A019444 in Sloane's database). Interestingly, the property of the sequence $\{q_j\}_{j\in\mathbb Z^{\geq 0}}$ with the non-negative numbers as domain established in Theorem \ref{tqj} is shared by the mentioned sequence A019444 with the natural numbers as domain, as noted by Ivan Neretin in the comments to the sequence in Sloane's database, where he says that it is a lexicographically minimal sequence of distinct positive integers such that all values of $a(n)-n$ are also distinct.

\section{Infinite orthogonal arrays}\label{ioas}

Recall (\cite[Definition 1.1]{heslst}) that, if $S$ is a finite alphabet with $s$ symbols, and if $N,k,\lambda\in\mathbb N$, and $t$ is a non-negative integer with $t\leq k$, then an orthogonal array (abbreviated as $OA$) $A$ with $N$ runs $s$ levels, $k$ factors, strength $t$ and index $\lambda$ is an $N\times k$ array with entries in $S$ satisfying that every $N\times t$ subarray of $A$ contains each $t$-tuple based on $S$ exactly $\lambda$ times as a row.

Obviously, in that case the identity $N=\lambda s^t$ holds.

Such an array is said to be an $OA(N,k,s,t)$. When $\lambda=1$ the orthogonal array is said to have index unity. If, besides, $t=2$, we will denote an $OA(n^2,m,n,2)$, for brevity, just as an $OA(m,n)$.

The following array is an example of an $OA(4,3)$.

$$\left(
\begin{array}{cccc}
 0 & 0 & 0 & 0 \\
 0 & 1 & 1 & 2 \\
 0 & 2 & 2 & 1 \\
 1 & 0 & 2 & 2 \\
 1 & 1 & 0 & 1 \\
 1 & 2 & 1 & 0 \\
 2 & 0 & 1 & 1 \\
 2 & 1 & 2 & 0 \\
 2 & 2 & 0 & 2 \\
\end{array}
\right)$$

Symmetries of orthogonal arrays are considered through the introduction of automorphisms. Symbol automorphisms are permutations of symbols that preserve the $OA$ when the rows are reordered, and columns automorphisms are permutations of columns that preserve the $OA$ when the rows are reordered. More generally compositions of symbol automorphisms and columns automorphisms can be considered. The automorphisms of an $OA$ form a group with respect to composition, which is called the whole automorphisms group. It is useful to take also subgroups of the whole automorphisms group, which are referred as automorphisms groups. We will consider only automorphisms groups formed by symbol automorphisms.

A useful tool to obtain $OAs$ is the utilization of difference matrices:

\begin{dfn}\cite[Chapter 17, VI]{cd2007} Given a group $(G,+)$ of order $n$ and two natural numbers $m,\lambda$, a $(n,m;\lambda)$-difference matrix is a $m\times \lambda n$ matrix $D=(d_{ij})$ with entries in $G$ satisfying that for each $1\leq r<s\leq m$, the multiset $$\{d_{rl}-d_{sl}\mid 1\leq l\leq \lambda n\}$$ contains each element of $G$ exactly $\lambda$ times.
\end{dfn}

Obviously, if $D$ is a $(n,m;\lambda)$-difference matrix, then the $\lambda n^2\times m$ array $A$ with its rows indexed by pairs $(j,g)$ with $1\leq j\leq \lambda n$ and $g\in G$ and columns indexed by $i$ with $1\leq i\leq m$ given by $A=(a_{(j,g)i})=d_{ij}+g$ is a $OA(\lambda n^2,m,n,2)$.

For each $g\in G$ the mapping $f_g:G\longrightarrow G$ defined by $f(g^{\prime})=g^{\prime}+g$ is clearly an automorphism of the $OA$, and the set $\{f_g\mid g\in G\}$ is an automorphisms group isomorphic to $G$, and in this way the group $G$ acts regularly on the set of symbols.

We are specially interested in the case when $\lambda=1$, which is the one that we will consider in this work. In this way, we can obtain from any $(n,m;1)$-difference matrix an $OA(m,n)$ with an automorphism group isomorphic to $G$ acting regularly on their symbols. 

The case when the group $G$ is cyclic is specially interesting, and in this case difference matrices are called cyclic difference matrices (\cite{afg},\cite{ag},\cite{bj},\cite{cly},\cite{ge1},\cite{ge2},\cite{gl},\cite{lo},\cite{mk1},\cite{mk2},\cite{wj}).

We can generalize the definition of orthogonal array letting the alphabet $S$ to be a countable infinite set, so that the array has also a countable infinite number of rows, and letting also a countable infinite number of factors, so that the array has a countable infinite number of columns.

We can use symmetry in the search of countably infinite orthogonal arrays, by generalizing in an obvious way the concept of difference matrix:

\begin{dfn} Given a countable infinite group $(G,+)$ and a natural number $\lambda$, a $(\infty,\infty;\lambda)$-difference matrix is a matrix $D=(d_{ij})$ with its rows and columns indexed by the set of natural numbers with entries in $G$ satisfying that for each $r,s\in\mathbb N$ with $r<s$, the multiset $$\{d_{rl}-d_{sl}\mid l\in\mathbb N\}$$ contains each element of $G$ exactly $\lambda$ times.
\end{dfn}

We can use countably infinite difference matrices, using the same construction that the one for finite difference matrices, to obtain countably infinite orthogonal arrays.

An obvious way to get infinite difference matrices is the following one: Take a countably infinite field $K=\{a_0,a_1,\dots\}$ (here, obviously, the ordering of its elements is arbitrary and it is given just to stress the countability of its elements). We can suppose without losing generality that $a_0=0$ and hence that the $a_i$ with $i\in\mathbb N$ are inversible. Clearly, the infinite matrix $D=(d_{ij})$ with $i,j\in\mathbb N$ with entries in $K$ with $d_{ij}=a_ia_j$ is a $(\infty,\infty;1)$-difference matrix, because $d_{rl}-d_{sl}=(a_r-a_s)a_l$ and, since $a_r-a_s\not=0$, every element in $K$ can be expressed in that form for a unique index $l$.

In the previous construction we obtain infinite orthogonal arrays with the additive group of $K$ as a regular automorphism group, but it is not a cyclic group. As we said before, difference matrices over cyclic groups are specially investigated, and it would be desirable to consider cyclic groups in this infinite setting. All infinite cyclic groups are isomorphic to the additive group of integer numbers, and therefore this will be our focus of interest in what follows.

We conjecture that the following generalization of Theorem \ref{tqj} holds:

\begin{cnj}\label{conj} If we consider the non-negative integers $q_{i,j}=q_{i,j}^{[m,n]}$ obtained in Algorithm \ref{alg} (note that they are independent of $m$ and $n$), then for any $i\geq 1$ the sequence $\{q_{i,j}\}_{j\in\mathbb Z^{\geq 0}}$ is a permutation of the set $\mathbb Z^{\geq 0}$ and, for every natural numbers $i_1,i_2$ with $i_1<i_2$ every integer number appears exactly once in the sequence $\{q_{i_1,j}-q_{i_2,j}\}_{j\in\mathbb Z^{\geq 0}}$.
\end{cnj}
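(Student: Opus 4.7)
My plan is to proceed by strong induction on $i\geq 1$, using Theorem \ref{tqj} as the seed and extending it to every row. The case $i=1$ is immediate since $q_{1,j}=j$; the case $i=2$, together with the pair $(i_1,i_2)=(1,2)$, is exactly the content of Theorem \ref{tqj} (since $q_{2,j}-q_{1,j}=q_{2,j}-j$). For the inductive step I would assume that each row $\{q_{i',\cdot}\}$ with $i'<i$ is a permutation of $\mathbb{Z}^{\geq 0}$ and that for every pair $i_1<i_2<i$ the sequence $\{q_{i_1,j}-q_{i_2,j}\}_j$ exhausts $\mathbb{Z}$ bijectively.

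The central task will be a piecewise description of the row $\{q_{i,j}\}$ in terms of generalized Beatty sequences $f_{r,s}$, indexed by a standard Fibonacci-like partition $\mathfrak{P}_{k(i)}$ whose depth $k(i)$ grows with $i$. For $i=2$ the three-part partition of Corollary \ref{partt1t2t3} was enough; for larger $i$ one expects $\mathfrak{P}_{i+2}$ or some fusionable refinement thereof. Concretely, I would guess formulas of the form $q_{i,j}=f_{\rho(i,\tau),\sigma(i,\tau)}(j)$ on the part of $\mathfrak{P}_{k(i)}$ labeled by $\tau$, fit the parameters by computing small cases, and then verify them by a direct extension of Lemma \ref{mainlemmaq2}. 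Explicitly, one would track the forbidden set $S_j^{(i)}=\bigcup_{l<i}\{q_{i,k}+q_{l,j}-q_{l,k}:0\leq k<j\}$ and show, case by case according to the part of $\mathfrak{P}_{k(i)}$ to which $j-1$ belongs, that $S_j^{(i)}$ consists of an initial interval $\{0,1,\dots,M_j^{(i)}\}$ together with a small controlled exceptional set, so that the mex can be read off explicitly. The bifurcation identities of Theorem \ref{bifparts} and Proposition \ref{indpartfkind} would be the combinatorial engine driving the step $j-1\mapsto j$, playing the same role that Lemma \ref{ajajmu} and Lemma \ref{ahb} played in the proof of the $i=2$ case.

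Once such piecewise formulas are available, the permutation property for row $i$ follows because the images on the different parts of $\mathfrak{P}_{k(i)}$ themselves form a disjoint $R_{r,s}$-partition of $\mathbb{Z}^{\geq 0}$, and the difference property for a pair $(i_1,i_2)$ follows by taking a common fusionable refinement of $\mathfrak{P}_{k(i_1)}$ and $\mathfrak{P}_{k(i_2)}$ and observing that on each joint piece the difference $q_{i_1,j}-q_{i_2,j}$ is itself a generalized Beatty sequence whose ranges over the various pieces together tile $\mathbb{Z}$.

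The principal obstacle is simultaneously identifying the correct partition $\mathfrak{P}_{k(i)}$ and the right parameters $(\rho(i,\tau),\sigma(i,\tau))$ for every $i$, and then controlling the mex through an increasingly intricate case analysis as $i$ grows (the forbidden set absorbs one new layer of contributions for every previous row, and the bookkeeping that gave six cases in Lemma \ref{mainlemmaq2} will roughly multiply with $i$). A potentially cleaner alternative would be to look for self-similarity: one might hope to show that the Wythoff substitutions $j\mapsto a(j)$ and $j\mapsto b(j)$ conjugate row $i$ into combinations of rows of higher index, via a functional equation derivable from Lemma \ref{basicrelwyth}. Establishing any such recursion would collapse the inductive step into a single identity, yielding the conjecture uniformly in $i$ without the escalating case analysis, and would simultaneously explain why the infinite matrix $Q$ deserves to be called \emph{non-trivial}, that is, why it cannot arise from the field construction described earlier in this section.
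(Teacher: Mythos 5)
The statement you were given is not a theorem of the paper but its central open problem: it appears as Conjecture \ref{conj}, and the paper proves only the row $i=2$ case (Theorem \ref{tqj}), offering numerical evidence for everything else in Section \ref{oq}. So there is no proof in the paper to compare against, and your text does not supply one either: it is a programme in which every decisive step --- guessing the piecewise formulas $q_{i,j}=f_{\rho(i,\tau),\sigma(i,\tau)}(j)$, fitting the parameters, extending Lemma \ref{mainlemmaq2} to control the forbidden set $S_j^{(i)}$ --- is deferred, as you yourself acknowledge. An induction whose inductive step is ``find the right formulas and verify them'' has no content beyond the already-proved base case, so this is a genuine gap, not a completed argument.

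More seriously, the central ansatz of your plan conflicts with the paper's own data. The cluster analysis in Section \ref{oq} and Appendix A indicates that the number of asymptotic lines in the rows is $1,1,2,3,5,9,12,\dots$ rather than the Fibonacci numbers $1,1,2,3,5,8,13,\dots$, so from about the sixth row onward the rows cannot be organised along a standard Fibonacci-like partition $\mathfrak P_{k(i)}$ (or any fusionable refinement with $F(k)$ parts) as you propose. Worse, the estimated slopes of the fourth row, approximately $0.6157$, $1.5376$, $1.9331$, are reported not to be roots of any integer polynomial of degree at most $3$ with coefficients of absolute value at most $5$; a piece of the form $\lfloor(a\varphi+b)n+c\rfloor$ would have slope $a\varphi+b$, a quadratic irrational with small coefficients, and indeed $0.6157\neq\varphi^{-1}\approx 0.6180$. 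So already at $i=3$ the row appears not to be a piecewise generalized Beatty sequence at all, which would defeat both your formula-fitting step and the ``common refinement'' argument you sketch for the difference property. The paper itself flags finding closed formulas for rows beyond the third as a difficult open task; your closing suggestion of a substitution/self-similarity functional equation is a more plausible direction than the Beatty ansatz, but as written it is a hope rather than an argument.
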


If Conjecture \ref{conj} were true then the infinite array $(q_{i,j})_{i\in\mathbb N,j\in\mathbb Z^{\geq 0}}$ would be a $(\infty,\infty;1)$-difference matrix, and we would obtain from it, in a similar way as it is done for the finite case, an orthogonal array with an infinite number of rows and columns, having a cyclic infinite automorphisms group.

Next we will show the first square matrix of order $20$ obtained from the $q_{i,j}$:

$$\left(
\begin{array}{cccccccccccccccccccc}
 0 & 0 & 0 & 0 & 0 & 0 & 0 & 0 & 0 & 0 & 0 & 0 & 0 & 0 & 0 & 0 & 0 & 0 & 0 & 0 \\
 0 & 1 & 2 & 3 & 4 & 5 & 6 & 7 & 8 & 9 & 10 & 11 & 12 & 13 & 14 & 15 & 16 & 17 & 18 & 19 \\
 0 & 2 & 1 & 5 & 7 & 3 & 10 & 4 & 13 & 15 & 6 & 18 & 20 & 8 & 23 & 9 & 26 & 28 & 11 & 31 \\
 0 & 3 & 5 & 1 & 9 & 2 & 7 & 11 & 4 & 8 & 16 & 6 & 23 & 20 & 28 & 24 & 10 & 34 & 30 & 12 \\
 0 & 4 & 7 & 9 & 1 & 6 & 2 & 5 & 3 & 11 & 14 & 23 & 19 & 26 & 8 & 29 & 24 & 10 & 27 & 38 \\
 0 & 5 & 3 & 2 & 6 & 1 & 11 & 16 & 18 & 21 & 26 & 9 & 4 & 32 & 34 & 42 & 40 & 7 & 15 & 8 \\
 0 & 6 & 10 & 7 & 2 & 11 & 1 & 9 & 20 & 3 & 23 & 8 & 13 & 4 & 29 & 36 & 38 & 26 & 45 & 44 \\
 0 & 7 & 4 & 11 & 5 & 16 & 9 & 1 & 15 & 2 & 29 & 28 & 32 & 25 & 3 & 13 & 45 & 42 & 6 & 24 \\
 0 & 8 & 13 & 4 & 3 & 18 & 20 & 15 & 1 & 7 & 33 & 2 & 9 & 40 & 16 & 34 & 5 & 11 & 44 & 49 \\
 0 & 9 & 15 & 8 & 11 & 21 & 3 & 2 & 7 & 1 & 42 & 26 & 5 & 35 & 33 & 4 & 54 & 50 & 22 & 6 \\
 0 & 10 & 6 & 16 & 14 & 26 & 23 & 29 & 33 & 42 & 1 & 3 & 36 & 2 & 11 & 8 & 12 & 5 & 4 & 35 \\
 0 & 11 & 18 & 6 & 23 & 9 & 8 & 28 & 2 & 26 & 3 & 1 & 39 & 22 & 25 & 14 & 51 & 40 & 55 & 15 \\
 0 & 12 & 20 & 23 & 19 & 4 & 13 & 32 & 9 & 5 & 36 & 39 & 1 & 3 & 54 & 60 & 28 & 22 & 2 & 58 \\
 0 & 13 & 8 & 20 & 26 & 32 & 4 & 25 & 40 & 35 & 2 & 22 & 3 & 1 & 57 & 45 & 6 & 41 & 17 & 66 \\
 0 & 14 & 23 & 28 & 8 & 34 & 29 & 3 & 16 & 33 & 11 & 25 & 54 & 57 & 1 & 6 & 13 & 36 & 64 & 2 \\
 0 & 15 & 9 & 24 & 29 & 42 & 36 & 13 & 34 & 4 & 8 & 14 & 60 & 45 & 6 & 1 & 49 & 2 & 57 & 80 \\
 0 & 16 & 26 & 10 & 24 & 40 & 38 & 45 & 5 & 54 & 12 & 51 & 28 & 6 & 13 & 49 & 1 & 74 & 69 & 9 \\
 0 & 17 & 28 & 34 & 10 & 7 & 26 & 42 & 11 & 50 & 5 & 40 & 22 & 41 & 36 & 2 & 74 & 6 & 68 & 18 \\
 0 & 18 & 11 & 30 & 27 & 15 & 45 & 6 & 44 & 22 & 4 & 55 & 2 & 17 & 64 & 57 & 69 & 68 & 1 & 95 \\
 0 & 19 & 31 & 12 & 38 & 8 & 44 & 24 & 49 & 6 & 35 & 15 & 58 & 66 & 2 & 80 & 9 & 18 & 95 & 1 \\
\end{array}
\right)$$

We report in Figure \ref{fig:row} the plots  of the first sequences in the infinite matrix, from the $3^{rd}$ row to the $7^{th}$ row and for $10^6$ columns of data obtained with Algorithm 1; the $1^{st}$ and $2^{nd}$  rows, which distribute along lines with slopes $0$ and $1$, respectively, are omitted because of their simplicity.

	\begin{figure}[H]
		\caption{First sequences in the infinite matrix, from the $3^{rd}$ row to the $7^{th}$ row (Algorithm 1)} \label{fig:row}
		\centering
		\begin{subfigure}{0.33\textwidth}
			\centering
			\caption{$3^{rd}$ row ($i=2$)}
				\includegraphics[width= \textwidth]{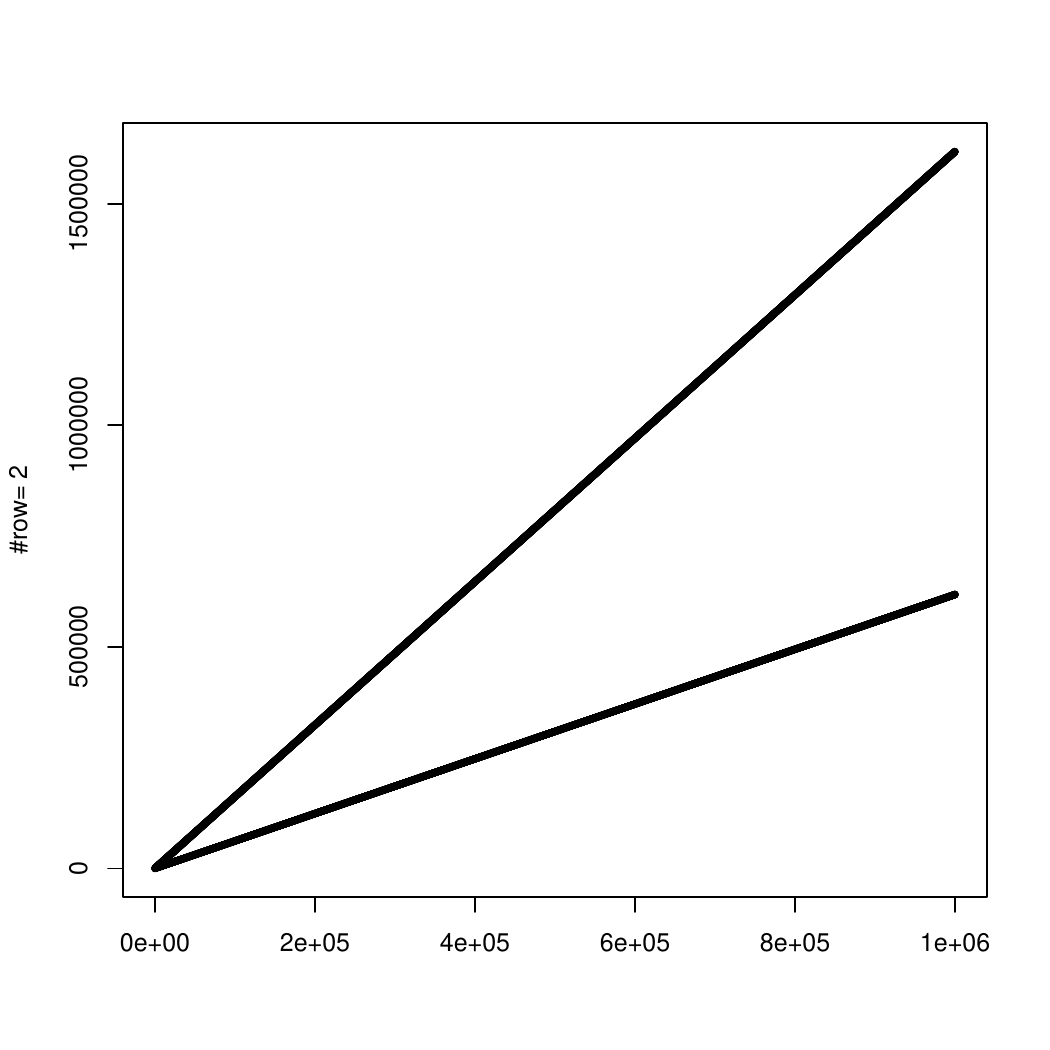}
		\end{subfigure}%
		\begin{subfigure}{0.33\textwidth}
			\centering
			\caption{$4^{th}$ row ($i=3$)}
			\includegraphics[width= \textwidth]{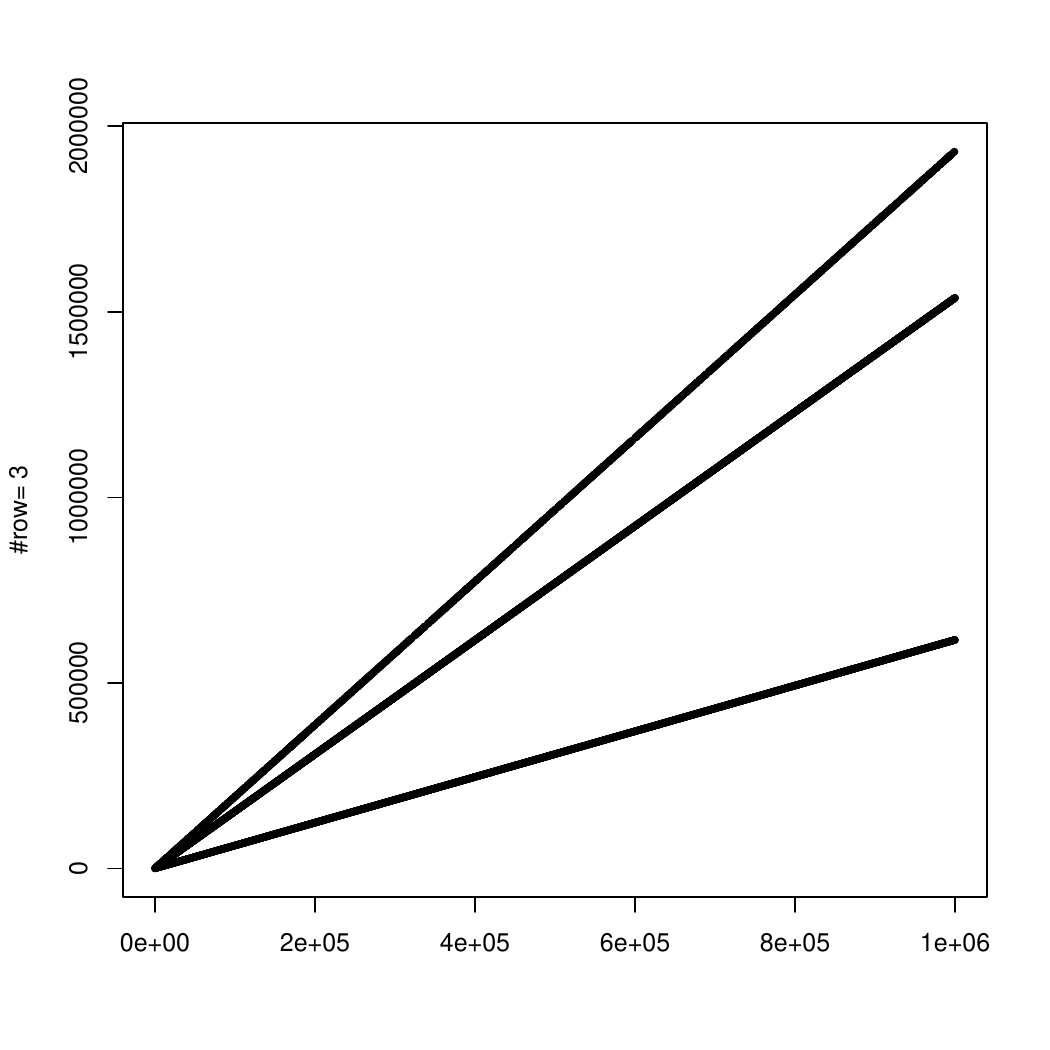}
		\end{subfigure}%
		\begin{subfigure}{0.33\textwidth}
			\centering
			\caption{$5^{th}$ row ($i=4$)}
			\includegraphics[width= \textwidth]{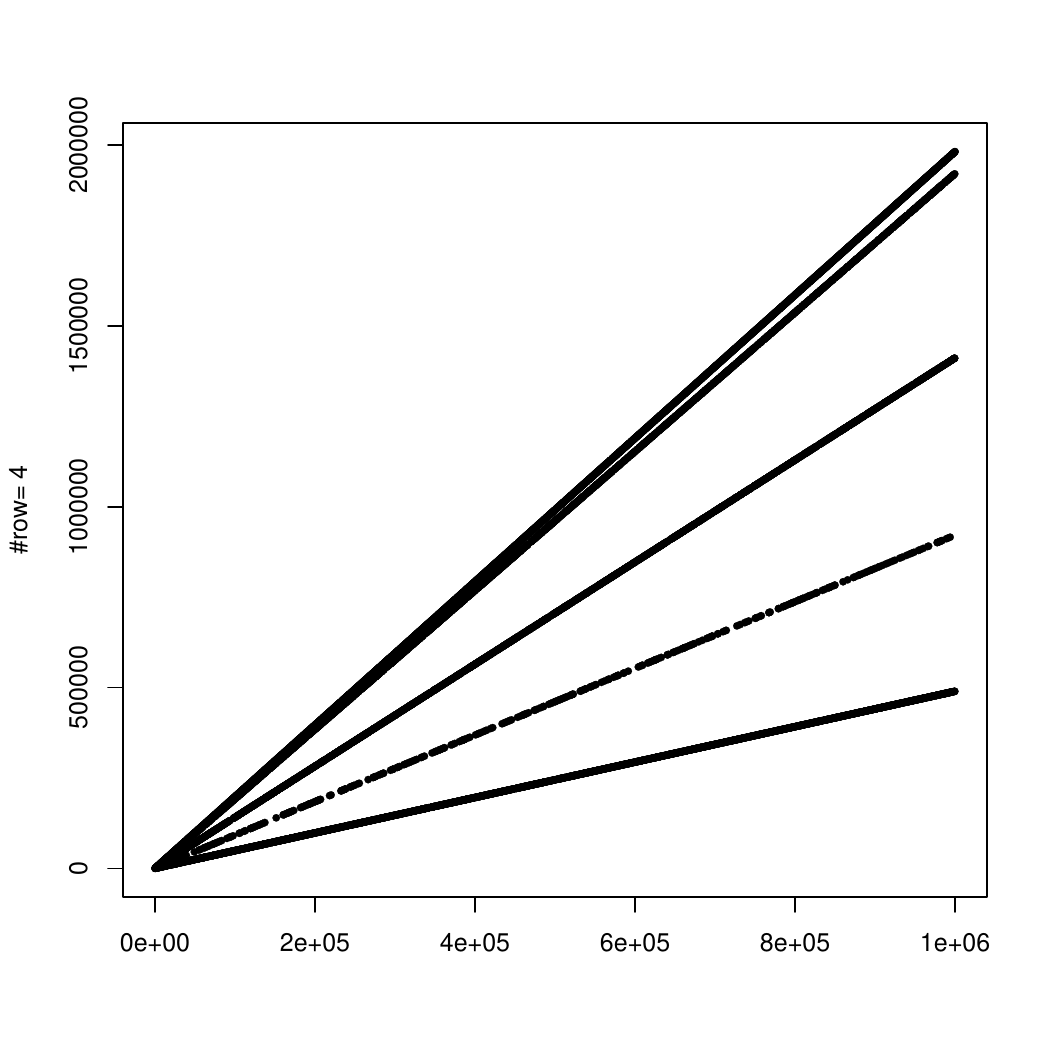}
		\end{subfigure}%

		\begin{subfigure}{0.33\textwidth}
			\centering
			\caption{$6^{th}$ row ($i=5$)}
				\includegraphics[width= \textwidth]{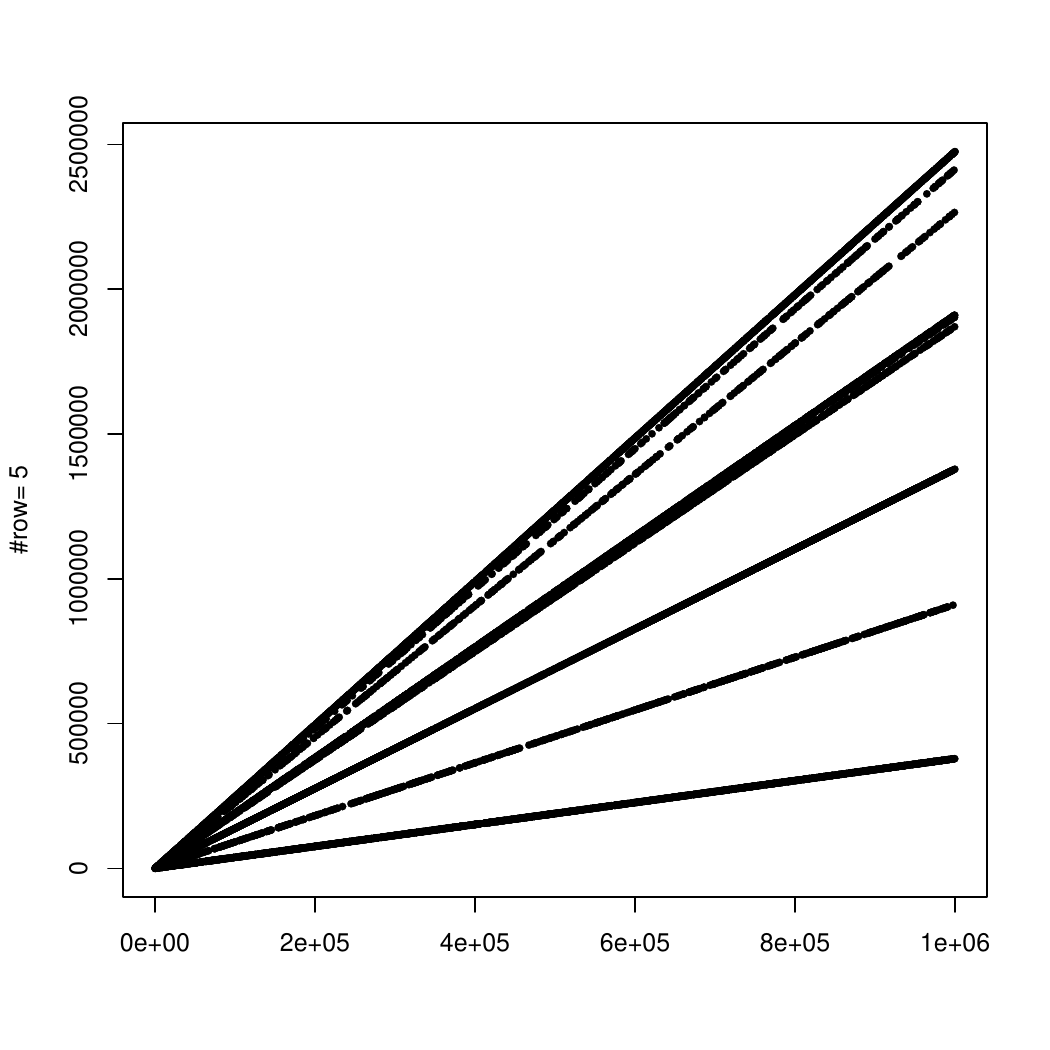}
		\end{subfigure}%
		\begin{subfigure}{0.33\textwidth}
			\centering
			\caption{$7^{th}$ row ($i=6$)}
	\includegraphics[width= \textwidth]{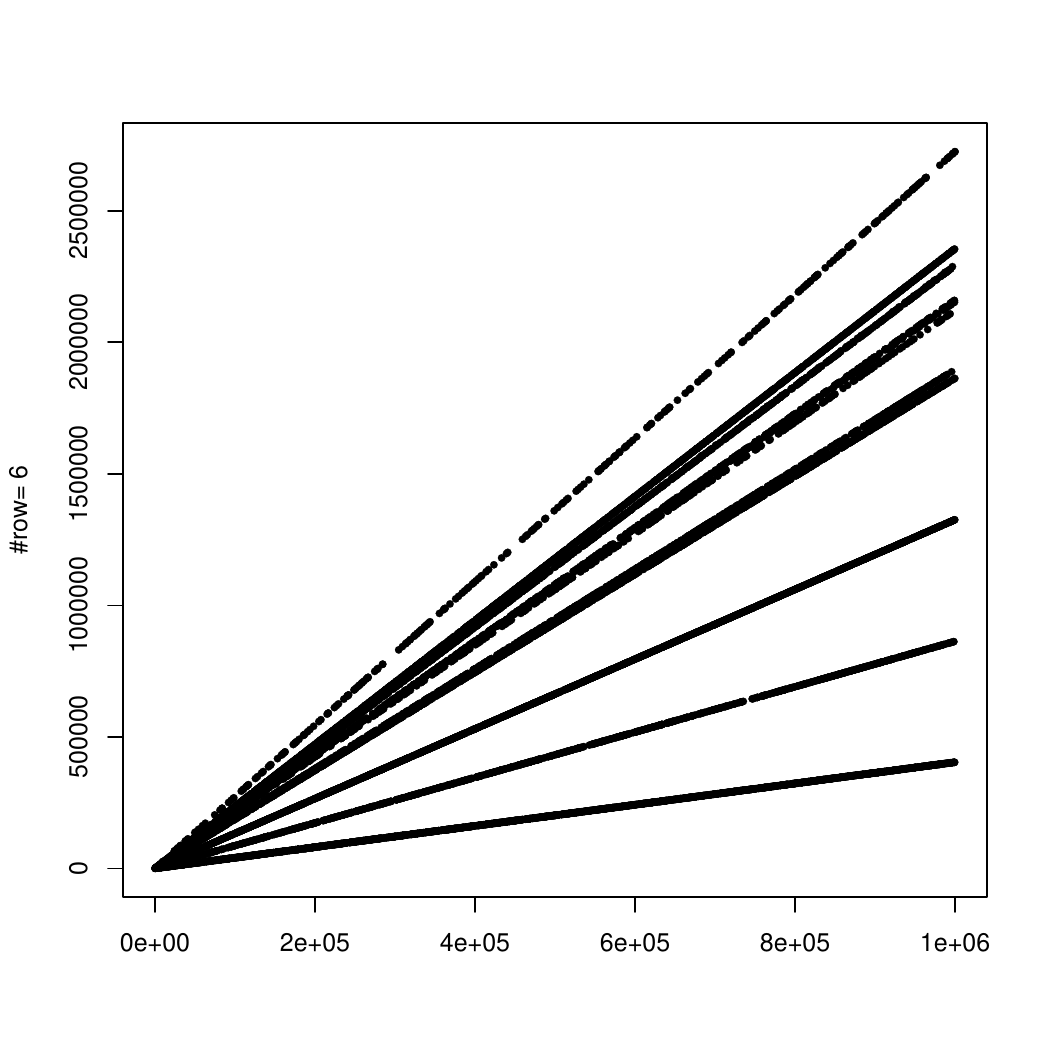}
	\end{subfigure}
		\begin{subfigure}{0.33\textwidth}
			\centering
			\caption*{} \end{subfigure}%
	\end{figure}

\section{Open questions and future research}\label{oq}

The main open question and object of future research is, of course, Conjecture \ref{conj}, for which there is strong numeric evidence in its favour, as we will show next. Of course, by the definition of Algorithm \ref{alg}, all the elements in each row are different non-negative numbers and also, when taking differences of elements in two different given rows from the second one onwards, all the differences give different integer numbers. The open question is that every non-negative integer number appears in each row and every integer number appears in each difference of rows. In the repository at \cite{ehubox} the reader can find the row data with one million points for the first seven rows obtained with Algorithm \ref{alg}. It can be found that every non-negative integer $\leq 378684$ appears in all rows and every integer with absolute value $\leq 78440$ appears as a difference in all difference of different rows (this, of course, does not means that only those numbers and differences appear, it is only intended to give an idea of consecutive values valid for the first six rows when sequences with one million values are considered). 

It would be good to obtain closed algebraic formulas for the rows from the fourth one onwards obtained with Algorithm \ref{alg}, in a similar way as we did for the third row. This seems to be a difficult and defiant task.

Looking at the figures at the end of Section \ref{ioas} it seems that the rows of the matrix $(q_{i,j})$ produced with Algorithm \ref{alg} are distributed asymptotically close to a set of lines. It would be interesting to study if this is really the case for all rows of the matrices.

Based on Figure \ref{fig:row} it might seem naked-eye that the number of lines follows the Fibonacci sequence. However, this seems not be the case, since the famous sequence  $1,1,2,3,5,8,13,\dots$ now turns out to be $1,1,2,3,5,9,12,\dots$. We have observed that after performing cluster analysis with the slopes in the last $10^4$ elements. The matrix data given by the slopes has been clustered by the hierarchical method, see  \cite{maimon}, where the euclidean distance and the complete-linkage clustering have been used for building the hierarchy of clusters, see \cite{everitt}.

	The complete-linkage clustering is a method of agglomerative hierarchical clustering. Initially each element is in a cluster of its own. Then, the two clusters separated by the shortest distance are combined. The clusters are sequentially combined into larger clusters until all elements end up being in the same cluster. The link between two clusters contains all element pairs, and the distance between clusters equals the distance between those two elements that are farthest away from each other. The shortest of these links that remains at any step causes the fusion of the two clusters whose elements are involved.
	Moreover, the Silhouette method has been used to validate the consistency within clusters of data, see \cite{rousseeuw},  the obtained curves are shown in Figure \ref{fig:sil} in Appendix A. We considered the $k$-means method for estimating the slopes for $k$ clusters, where $k$ correspond to the maximum in the Silhouette curve, see Table \ref{tb:alg1} in Appendix A. The $k$-means method aims to partition the values into $k$ groups such that the sum of squares from points to the assigned cluster centres is minimized. At the minimum, all cluster centres are at the mean of their Voronoi sets. The algorithm of Hartigan and Wong (1979) \cite{hartigan} is used by default. 

Besides knowing the number of lines for each sequence, it would be desirable to know also the slopes of the asymptotes. It is not clear even a so simple question as if they are algebraic numbers. Of course they are for the first three rows, but for the fourth row, estimations of the slopes of the three lines with $4$ precission digits are $0.6157$, $1.5376$, and $1.9331$ (for the estimation, the second half of the file \lq fourth\_row.txt\rq\ of the mentioned repository was analyzed, and the difference between the maximum and minimum was taken, being this difference less than $10^{-4}$), and they are not roots of polynomials of degree less than or equal to $3$ with integer coefficients with absolute value less than or equal to $5$, as one can see listing the roots of all those polynomials.

\section{Acknowledgments} We would like to thank Professor Jeffrey Shallit, from the University of Waterloo, for help with the use of the software Walnut \cite{walnut}.

 \appendix

\section{Cluster analysis} \label{sec:ap}

We report below the results of the cluster analysis explained in Section \ref{oq}, according to open questions and future research. The Silhouette curves of the hierarchical clustering  for $3^{rd}$ to $7^{th}$ rows are shown for Algorithm 1 in Figure \ref{fig:sil}, respectively. The results with the slopes estimations  are shown in Table \ref{tb:alg1} for Algorithm 1. Notice that the estimated values for the $3^{rd}$ row coincide, as expected, with  $\varphi^{-1}$ and $\varphi$ up to the sixth decimal digit for the proposed algorithm.

\begin{figure}[!ht]
	\caption{Silhouette curve of the hierarchical clustering for $3^{rd}$ to $7^{th}$ row (Algorithm 1)} \label{fig:sil}
	\centering
	\begin{subfigure}{0.33\textwidth}
		\centering
		\caption{$3^{rd}$ row ($i=2$)}
	\includegraphics[height=\ps \textheight]{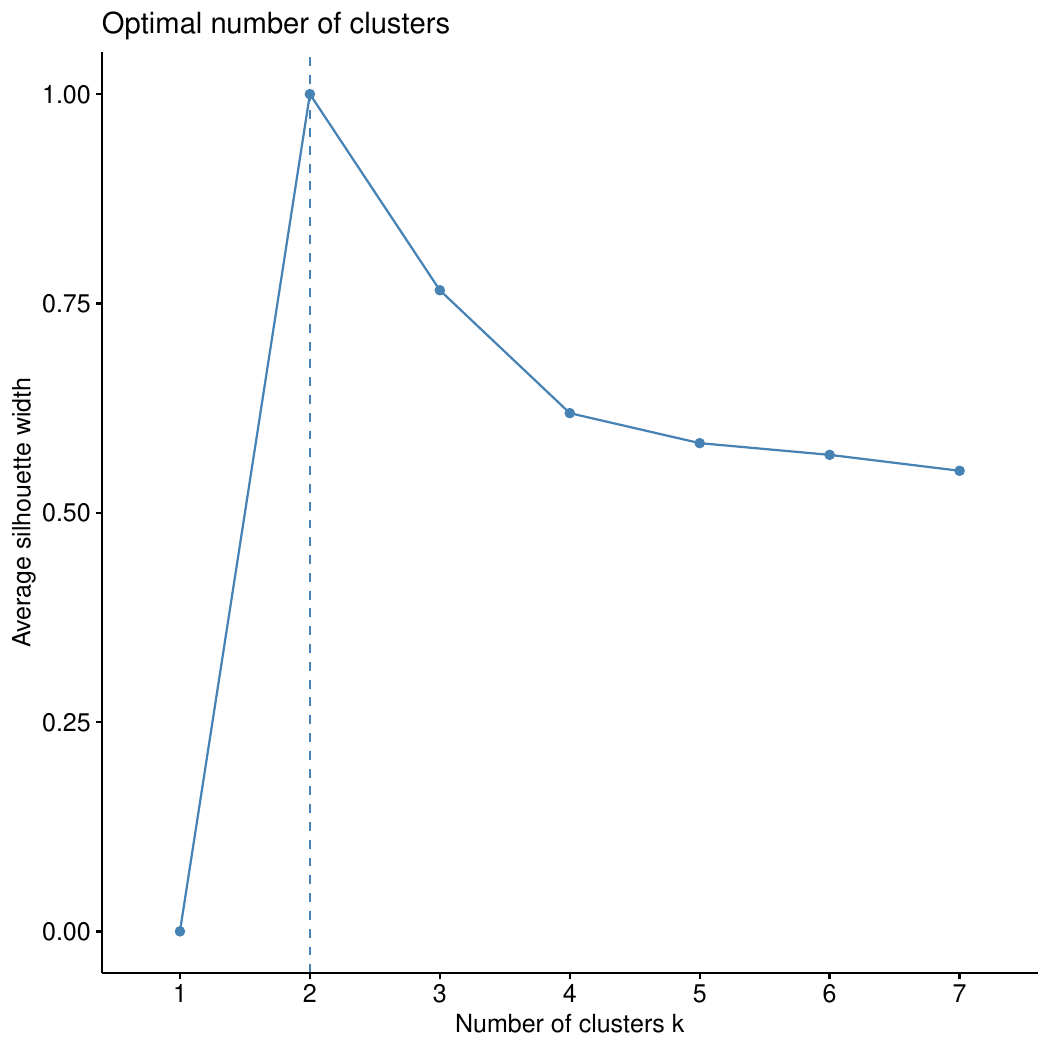}	
	\end{subfigure}%
	\begin{subfigure}{0.33\textwidth}
	\centering
	\caption{$4^{th}$ row ($i=3$)}
	\includegraphics[height=\ps \textheight]{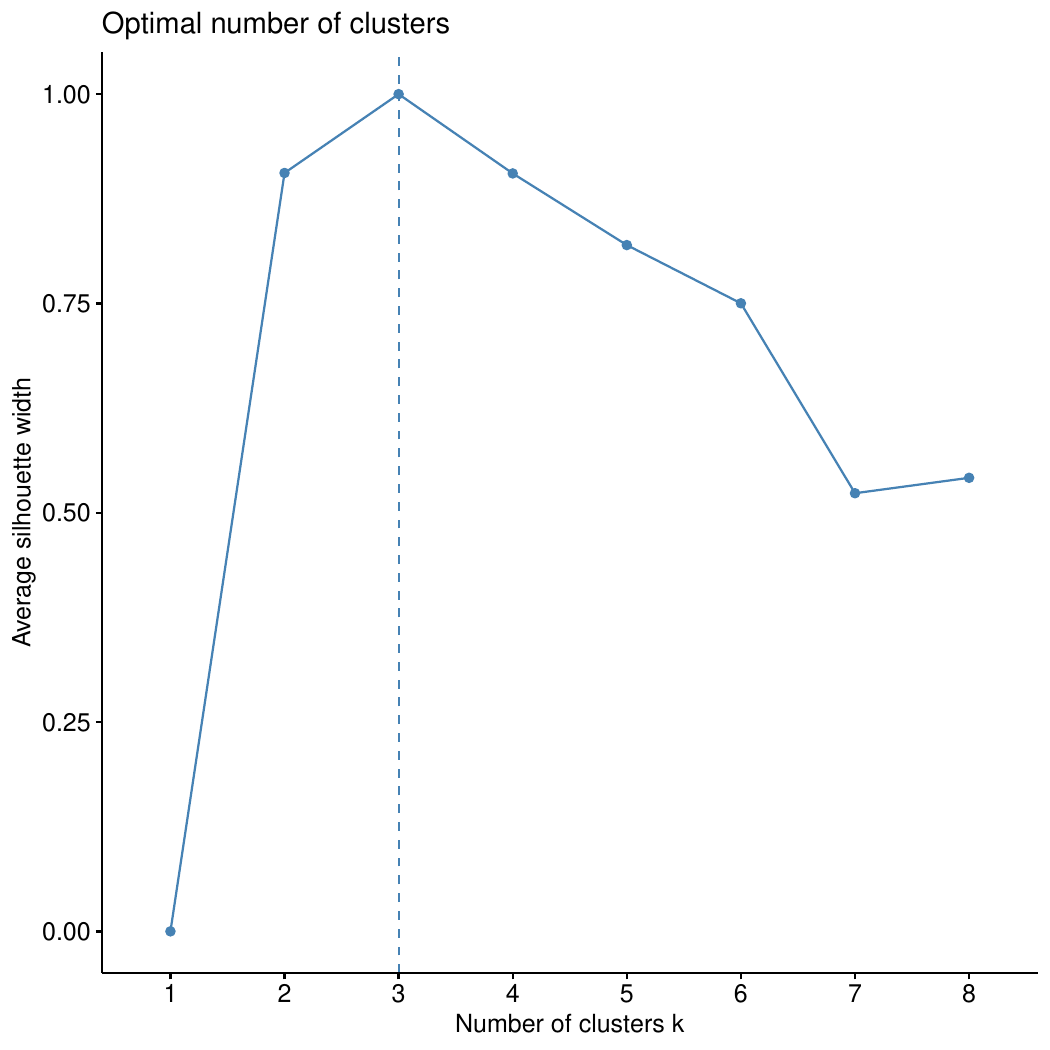}	
\end{subfigure}%
\begin{subfigure}{0.33\textwidth}
	\centering
	\caption{$5^{th}$ row ($i=4$)}
	\includegraphics[height=\ps \textheight]{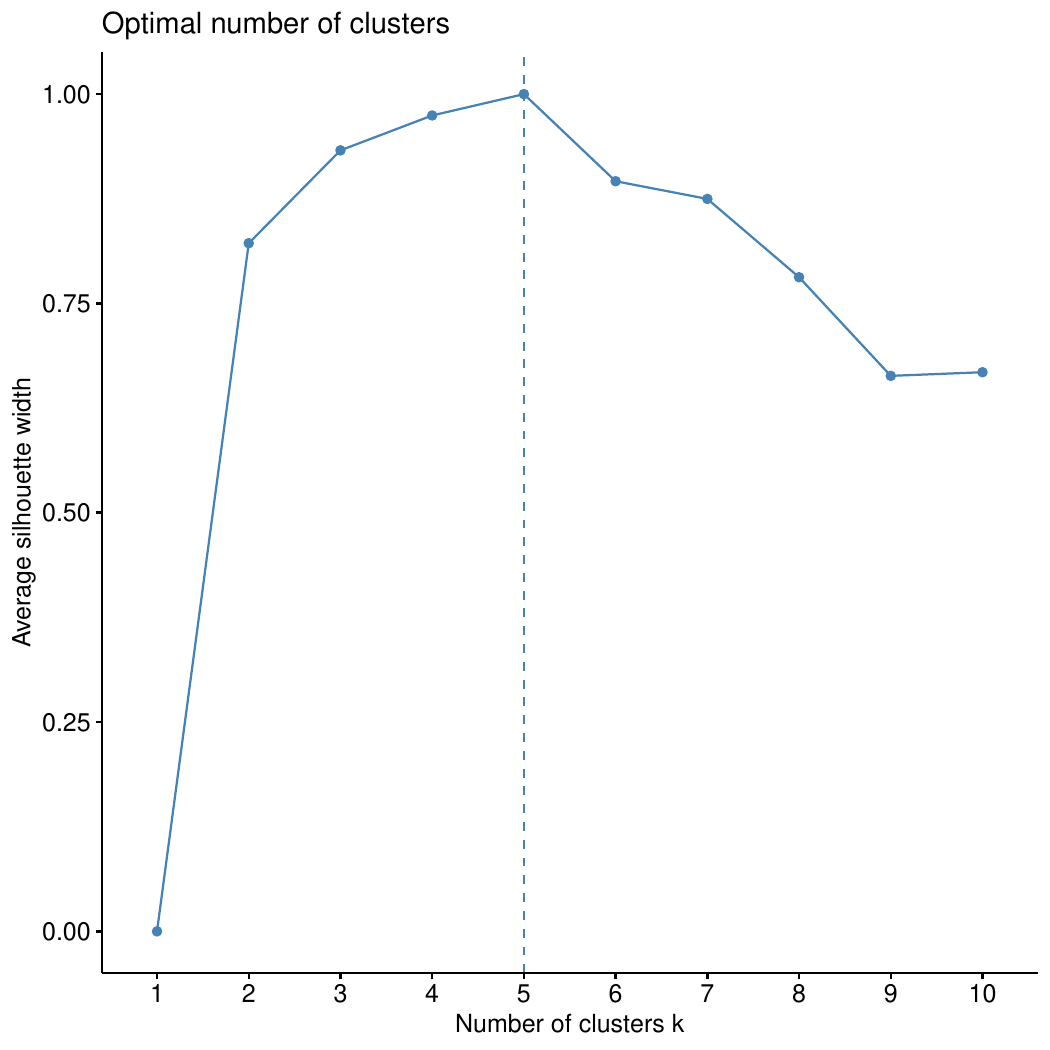}	
\end{subfigure}%

\begin{subfigure}{0.33\textwidth}
	\centering
	\caption{$6^{th}$ row ($i=5$)}
	\includegraphics[height=\ps \textheight]{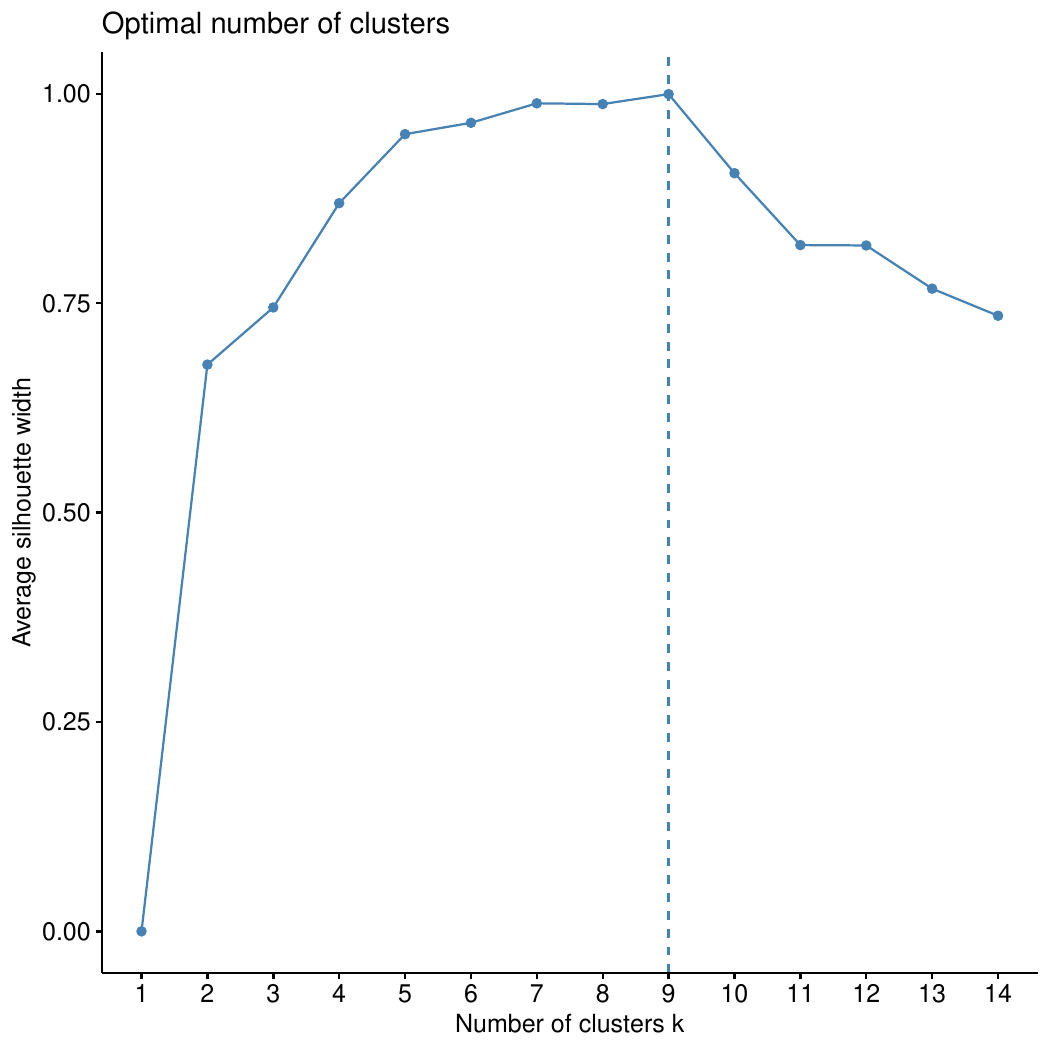}	
\end{subfigure}%
\begin{subfigure}{0.33\textwidth}
	\centering
	\caption{$7^{th}$ row ($i=6$)}
	\includegraphics[height=\ps \textheight]{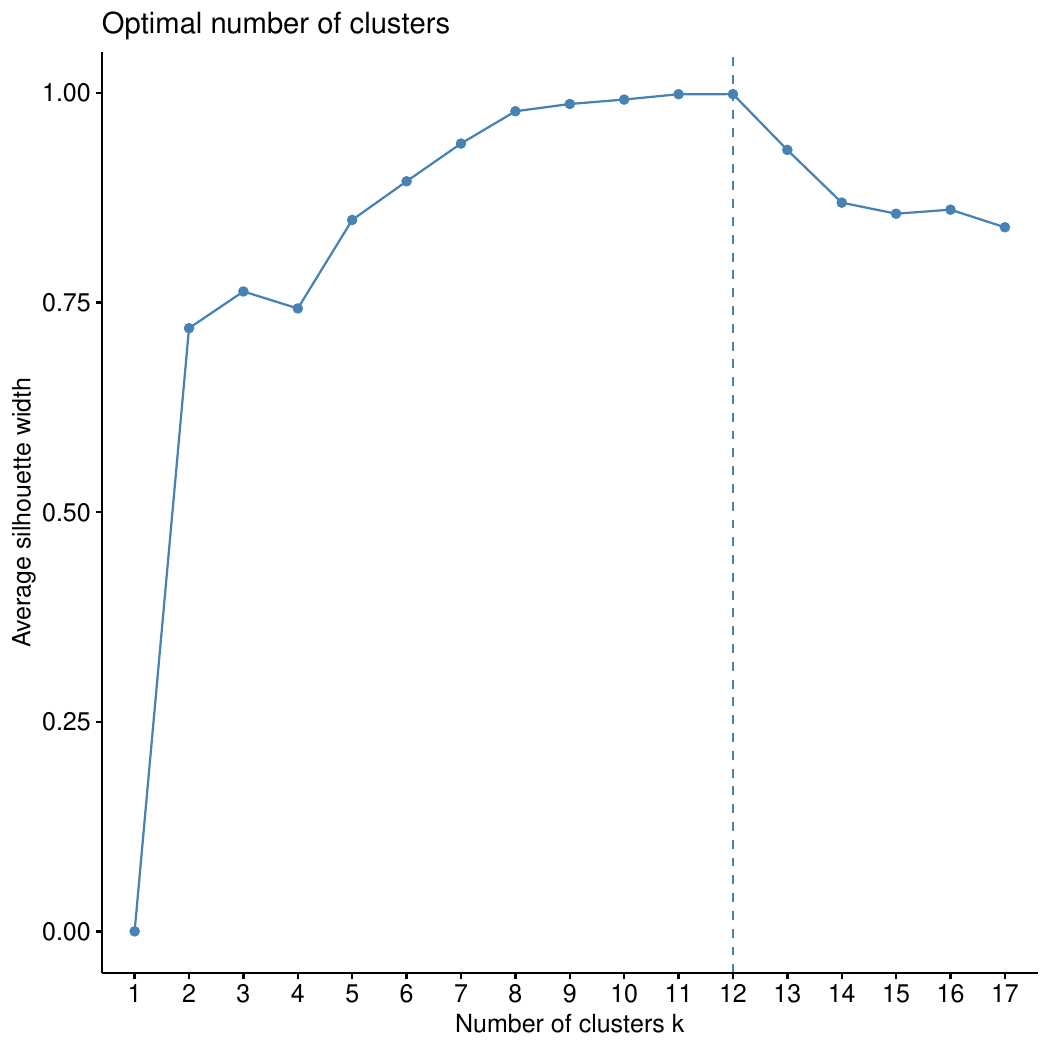}	
\end{subfigure}%
	\begin{subfigure}{0.33\textwidth}
	\centering
	\caption*{}	
	\end{subfigure}%
\end{figure}

\begin{table}[H]
	\centering
	\caption{Slopes for $2^{nd}$ to $7^{th}$ rows according to Algorithm 1 and $k$-means clustering, with data from the last $10^4$ elements and $k$ is the maximum in the Silhouette curve}  \label{tb:alg1}
	\begin{tabular}{lrrrrrr}
		\hline
	slopes	& $2^{nd}$ row & $3^{th}$ row & $4^{th}$ row & $5^{th}$ row & $6^{th}$ row & $7^{th}$ row \\ 
		\hline
		1 & 1 & 0.6180338 & 0.6157357 & 0.4897040 & 0.3786866 & 0.4033740 \\ 
		2 &  & 1.6180343 & 1.5376674 & 0.9215673 & 0.9111673 & 0.8630458 \\ 
		3 &  &  & 1.9331150 & 1.4116338 & 1.3786555 & 1.3255402 \\ 
		4 &  &  &  & 1.9215719 & 1.8712844 & 1.8630575 \\ 
		5 &  &  &  & 1.9823043 & 1.9096087 & 1.8959612 \\ 
		6 &  &  &  &  & 2.2667333 & 2.1209285 \\ 
		7 &  &  &  &  & 2.4143427 & 2.1577673 \\ 
		8 &  &  &  &  & 2.4750619 & 2.2913921 \\ 
		9 &  &  &  &  & 2.4750933 & 2.2914530 \\ 
		10 &  &  &  &  &  & 2.2949202 \\ 
		11 &  &  &  &  &  & 2.3556537 \\ 
		12 &  &  &  &  &  & 2.7247096 \\ 
		\hline
	\end{tabular}
\end{table}

\end{document}